\theoremstyle{plain}
\newtheorem{theorem}{Theorem}[section]
\newtheorem{lemma}[theorem]{Lemma}
\newtheorem{corollary}[theorem]{Corollary}
\newtheorem{definition}[theorem]{Definition}
\theoremstyle{definition}
\numberwithin{equation}{section}
\numberwithin{figure}{section}
\def\R{{\mathbb R}}
\definecolor{Green}{rgb}{0, 0.65,0}
\begin{document}
\title[A Stability Result for the $\infty$-Laplace Equation]{A Stability Result for the $\infty$-Laplace Equation}
\author{Marta Lewicka}
\address{University of Pittsburgh, Department of Mathematics,
301 Thackeray Hall, Pittsburgh, PA 15260, USA }
\author{Nikolai Ubostad}
\address{Department of Mathematical Sciences 
Norwegian University of Science and Technology 
N-7491 Trondheim, Norway}
\email{lewicka@pitt.edu, nikolai.ubostad@ntnu.no}
\date{\today}

\begin{abstract}
We investigate a degenerate elliptic PDE related to the
$\infty$-Laplace equation $\Delta_{\infty}u=0$.  A stability result is derived via Jensen's Auxiliary  equations. The $\Gamma$-convergence of the corresponding functionals is proven.
\end{abstract}

\maketitle

\section{Introduction}

The $\infty$-Laplace equation:
\begin{equation} \label{eq:introinfty}
\Delta_{\infty}u =\sum_{i, j =1}^{n}u_{x_i}u_{x_j}u_{x_ix_j}=0 
\end{equation}
was introduced by Aronsson in \cite{aronsson}. Its solutions are
called the $\infty$-harmonic functions and they turn out to be the
absolutely minimizing Lipschitz extensions of the given boundary
values. In particular, equation (\ref{eq:introinfty}) can be
interpreted as the Euler-Lagrange equation for the functional
$u\mapsto \|\nabla u\|_{\infty}$, and it also coincides with the limit of the $p$-Laplace equations:
\begin{equation}
\label{eq:plaplaceintro}
\Delta_pu=|\nabla u|^{p-4}\left(|\nabla  u|^2\Delta u +(p-2)\Delta_{\infty}u\right) =0,
\end{equation}
as $p \to \infty$.
For the details of these statements and an otherwise thorough study of
(\ref{eq:introinfty}), we refer to the review work by Lindqvist
\cite{Lind_rev}. We also note that (\ref{eq:introinfty}) arises in
connection with Tug-of-War games, studied by Peres et. al in
\cite{peres}, and that it has applications within image processing as
discussed by Caselles et. al \cite{caselles}, and within glaciology \cite{Glowinski2003}.
The evolutionary counterpart $u_t= \Delta_{\infty}u$
and related equations have recently received attention, see for
example \cite{crandall2003another},  \cite{Juutinen2006}. 

The study of \eqref{eq:introinfty} is  difficult because the equation
is both fully nonlinear and   degenerate elliptic. Since it
cannot be written in divergence form, solutions are understood in the
sense of viscosity, introduced by Crandall and Lions in
\cite{crandall1983viscosity}, is required. This approach was taken in
\cite{bhattacharya1989limits} to show convergence of
(\ref{eq:plaplaceintro}) to (\ref{eq:introinfty}).  
Several other approximation methods have been used, notably Jensen's auxiliary equations:
\begin{align*}
&\min\{\Delta_{\infty} v, |\nabla v|-\epsilon\}=0, \\
&\max\{\Delta_{\infty}u, \epsilon-|\nabla u|\}=0,
\end{align*}
in the context of the comparison principle in \cite{jensen}.
Another interesting device is the ``patching solutions,'' introduced
by Crandall, Gunnarson and Wang in
\cite{crandall_gunnarsson_wang}. There is also the ``easy'' proof of
uniqueness by Armstrong and Smart \cite{Armstrong2010}.
 
In this note, we introduce a new approach. Attempting to eliminate the
domain where $\nabla u$ vanishes, we minimize the variational energies of the form:
\begin{equation} \label{eq:introvarint}
E_p^\sigma(u) = \int_{\Omega} \{|\nabla u|^2-\sigma\}_+^\frac{p}{2} \ dx,
\quad p>2, \quad \sigma >0,
\end{equation}
and let $\sigma \to 0$ and $p \to \infty.$ 
Our first main result is that, regardless what order of limits is
taken, minimizers of \eqref{eq:introvarint} converge to the viscosity
solutions of the $\infty$-Laplace equation \eqref{eq:introinfty}. 
\begin{theorem} \label{thm:thegoodstuffs}
Let $u_{p, \sigma}$ denote minimizers of \eqref{eq:introvarint}, let
$u_p$ be the solution to the $p$-Laplace equation
\eqref{eq:plaplaceintro} and $u_{\infty}$ be a solution to
\eqref{eq:introinfty}. Then the following diagram of convergence
commutes:
\[ \begin{tikzcd}
    u_{p, \sigma} \arrow{d}{\sigma \to 0} \arrow{r}{p \to \infty}  & u_{\infty, \sigma} \arrow{d}{\sigma \to 0} \\
     u_p \arrow{r}{p \to \infty} & u_{\infty}
  \end{tikzcd}. \]
The intermediate limit $u_{\infty, \sigma}$ above solves the
interesting equation:
\[ \{|\nabla u|^2-\sigma\}_+\Delta_{\infty}u =0. \]
\end{theorem}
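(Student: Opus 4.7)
The plan is to prove the theorem by establishing each of the four arrows in the diagram separately, and then invoking uniqueness of the endpoint $u_\infty$ (Jensen's theorem) to conclude that the diagram commutes. The overarching tools are the direct method of the calculus of variations, uniform Lipschitz bounds, and Barles--Perthame-type half-relaxed viscosity limits.

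First I would verify existence of minimizers $u_{p,\sigma}$ of $E_p^\sigma$ with prescribed Lipschitz boundary data, using convexity of $t \mapsto \{t^2 - \sigma\}_+^{p/2}$ on $[0,\infty)$ (which holds since $p/2 \ge 1$). The associated Euler--Lagrange equation, away from the free set $\{|\nabla u|^2 = \sigma\}$, reads
\[
\mathrm{div}\Bigl( \{|\nabla u|^2-\sigma\}_+^{p/2-1}\nabla u \Bigr) = 0,
\]
and a competitor argument with a Lipschitz extension of the boundary data gives a uniform Lipschitz bound on $u_{p,\sigma}$ in terms of the boundary Lipschitz constant, independent of both $p$ and $\sigma$. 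This uniform bound, together with the equicontinuity it implies, provides the compactness needed to extract limits along either edge of the diagram.

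For the vertical arrow $\sigma \to 0$ at fixed $p$: the integrand $\{t-\sigma\}_+^{p/2}$ increases monotonically to $t^{p/2}$ as $\sigma \downarrow 0$, so $\Gamma$-convergence of $E_p^\sigma$ to $u \mapsto \int_\Omega |\nabla u|^p\,dx$ is essentially immediate, and yields $u_{p,\sigma} \to u_p$. The top horizontal arrow $p \to \infty$ at fixed $\sigma$ requires more work: from the uniform Lipschitz bound extract a uniform limit $u_{\infty,\sigma}$, then pass to the viscosity limit in the Euler--Lagrange equation after dividing by $p\{|\nabla u|^2-\sigma\}_+^{p/2-1}$, using that $(|\nabla u|^2/\sigma)^{p/2-1}$ either blows up or goes to zero depending on whether $|\nabla u|^2 > \sigma$ or $< \sigma$; this is exactly the mechanism that produces the factor $\{|\nabla u|^2 - \sigma\}_+$ in the limit equation $\{|\nabla u|^2 - \sigma\}_+\Delta_\infty u = 0$. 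The bottom horizontal arrow $u_p \to u_\infty$ is the classical result of Bhattacharya--DiBenedetto--Manfredi.

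The main obstacle is the remaining vertical arrow $u_{\infty,\sigma} \to u_\infty$ as $\sigma \to 0$. The equation $\{|\nabla u|^2-\sigma\}_+\Delta_\infty u = 0$ is discontinuous in the gradient argument at $|\nabla u|^2 = \sigma$, so passing to the viscosity limit requires the Crandall--Ishii half-relaxed upper/lower envelopes, which naturally produce subsolution and supersolution inequalities in the Jensen auxiliary form $\min\{\Delta_\infty u, |\nabla u|-\sqrt\sigma\} \le 0$ and $\max\{\Delta_\infty u, \sqrt\sigma - |\nabla u|\} \ge 0$ at test points. Sending $\sigma\to 0$ collapses these to $\Delta_\infty u \le 0$ and $\Delta_\infty u \ge 0$ in the viscosity sense, and Jensen's uniqueness theorem for the Dirichlet problem for $\Delta_\infty u = 0$ then identifies the limit as $u_\infty$ and simultaneously forces commutativity of the diagram: both compositions of arrows must hit the unique $\infty$-harmonic extension of the boundary data.
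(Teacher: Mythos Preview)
Your overall architecture is sound and three of the four arrows match the paper's treatment closely (existence via the direct method, $\sigma\to 0$ at fixed $p$ by $\Gamma$/monotone convergence, $p\to\infty$ at fixed $\sigma$ by viscosity stability, and $u_p\to u_\infty$ by Bhattacharya--DiBenedetto--Manfredi). A couple of small inaccuracies: for finite $p$ the competitor argument only yields a $W^{1,p}$ bound, not a genuine Lipschitz bound (Lipschitz comes only in the limit $p\to\infty$); and your description of the $p\to\infty$ mechanism is muddled---what one actually does is divide the non-divergence form by $(p-2)$, not by $p\{|\nabla u|^2-\sigma\}_+^{p/2-1}$, and the ratio $(|\nabla u|^2/\sigma)^{p/2-1}$ plays no role there. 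Also, since you have uniform H\"older bounds and hence locally uniform convergence, Barles--Perthame half-relaxed limits are unnecessary; ordinary viscosity stability suffices.

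The substantive difference is in the arrow $u_{\infty,\sigma}\to u_\infty$. You argue directly: the operators $F_\sigma(\xi,X)=\{|\xi|^2-\sigma\}_+\langle X\xi,\xi\rangle$ converge locally uniformly to $|\xi|^2\langle X\xi,\xi\rangle$ as $\sigma\to 0$, so any uniform limit of $u_{\infty,\sigma}$ is a viscosity solution of $|\nabla u|^2\Delta_\infty u=0$, which (because $\nabla\phi=0$ forces $\Delta_\infty\phi=0$) has the same viscosity solutions as $\Delta_\infty u=0$; Jensen's uniqueness then pins down the limit. The paper instead introduces \emph{new} auxiliary variational integrals $\int \frac{1}{p}\{|\nabla u|^2-\sigma^2\}_+^{p/2}\mp\sigma^{p-4}u$, whose Euler--Lagrange equations force $|\nabla u|>\sigma$, shows their $p\to\infty$ limits are precisely Jensen's upper and lower equations, sandwiches $u_{p,\sigma}$ between these auxiliary solutions by comparison, and then derives the quantitative estimate $\|u^+_\sigma-u^-_\sigma\|_\infty\le C\sigma$ via an energy inequality. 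Your route is shorter and avoids the auxiliary problems entirely; the paper's route is more elaborate but yields an explicit rate in $\sigma$ and makes the link to Jensen's equations (advertised in the title and abstract) the centrepiece of the argument rather than a citation at the end.
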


Since our approach focuses on the convergence of minimizers of
 $p$-energies, it is natural to consider the $\Gamma$-convergence of
 the corresponding functionals. 
Indeed, our second result states that the functionals corresponding to
the minimizers in Theorem \ref{thm:thegoodstuffs}, $\Gamma$-converge
with respect to the uniform norm in $C(\overline{\Omega})$:

\begin{theorem} \label{thm:gammaconvergences}
The following diagram of $\Gamma$-convergences with respect to the
uniform norm on $C(\overline{\Omega})$ commutes:
\[ \begin{tikzcd}
    E_{p}^{\sigma} \arrow{d}{\sigma \to 0} \arrow{r}{p \to \infty}  & E_{\infty}^{\sigma} \arrow{d}{\sigma \to 0} \\
     E_p \arrow{r}{p \to \infty} & E_{\infty},
  \end{tikzcd}\]
where $E_{p}^{\sigma}$, $E_{\infty}^{\sigma}$, $E_p$ and $ E_{\infty}
$ are the functionals with minimizers as in Theorem \ref{thm:thegoodstuffs}. 
\end{theorem}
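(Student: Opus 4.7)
The plan is to verify each of the four $\Gamma$-convergences in the square separately; since both compositions then agree at $E_\infty$, commutativity of the diagram follows automatically. Throughout we work in $C(\overline{\Omega})$ equipped with the uniform topology, extending each functional by $+\infty$ outside the appropriate Sobolev space, and we normalize by taking $p$-th roots so that the four limits live on the same scale: $E_p^\sigma(u) = \bigl(\int_\Omega \{|\nabla u|^2-\sigma\}_+^{p/2}\,dx\bigr)^{1/p}$, $E_p(u)=\|\nabla u\|_{L^p(\Omega)}$, $E_\infty^\sigma(u)=\|\{|\nabla u|^2-\sigma\}_+^{1/2}\|_{L^\infty(\Omega)}$, and $E_\infty(u)=\|\nabla u\|_{L^\infty(\Omega)}$. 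This rescaling does not affect the minimizers from Theorem \ref{thm:thegoodstuffs}.

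The two vertical arrows ($\sigma\to 0$) come from the monotone-convergence principle for $\Gamma$-limits. Indeed, $\{|\nabla u|^2-\sigma\}_+^{p/2}$ increases pointwise to $|\nabla u|^p$ and $(\|\nabla u\|_\infty^2-\sigma)_+^{1/2}$ to $\|\nabla u\|_\infty$ as $\sigma\searrow 0$; an increasing family of functionals $\Gamma$-converges to the lower semicontinuous envelope of its pointwise supremum, and both $E_p$ and $E_\infty$ are already lsc on $C(\overline{\Omega})$ under uniform convergence (via weak lsc in $W^{1,p}$ for the former, and via the fact that a uniform limit of $M$-Lipschitz maps is $M$-Lipschitz for the latter). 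The bottom horizontal arrow $E_p\xrightarrow{\Gamma}E_\infty$ is the classical $\Gamma$-convergence of normalized $p$-Dirichlet energies to the sup norm of the gradient.

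The only genuinely new piece is the top arrow $E_p^\sigma\xrightarrow{\Gamma}E_\infty^\sigma$ at fixed $\sigma>0$. The recovery sequence is the constant choice $u_p\equiv u$: if $u\in W^{1,\infty}$, standard $L^p\to L^\infty$ convergence applied to the bounded function $\{|\nabla u|^2-\sigma\}_+^{1/2}$ gives $E_p^\sigma(u)\to E_\infty^\sigma(u)$, while if $u\notin W^{1,\infty}$ the limsup bound is vacuous. For the $\liminf$ inequality, suppose $u_p\to u$ uniformly with $C_0:=\sup_p E_p^\sigma(u_p)<\infty$. The elementary bound $|z|^2\le \{|z|^2-\sigma\}_++\sigma$ combined with H\"older's inequality yields, for every fixed $q\ge 2$ and all $p>q$, a uniform bound of the form $\|\nabla u_p\|_{L^q(\Omega)}\le C(q,\sigma,|\Omega|,C_0)$. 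A diagonal extraction then produces a subsequence with $\nabla u_p\rightharpoonup \nabla u$ weakly in every $L^q$. Since $z\mapsto\{|z|^2-\sigma\}_+^{q/2}$ is convex for $q\ge 2$, weak lower semicontinuity in $W^{1,q}$, followed by one more application of H\"older, yields
\[ E_q^\sigma(u)\le |\Omega|^{1/q}\,\liminf_{p\to\infty}E_p^\sigma(u_p) \quad\text{for every } q\ge 2. \]
Sending $q\to\infty$, and using the $L^q\to L^\infty$ convergence of $\|\{|\nabla u|^2-\sigma\}_+^{1/2}\|_{L^q}$ together with the $W^{1,\infty}$-regularity of $u$ that falls out of the same diagonal argument, gives the required $E_\infty^\sigma(u)\le\liminf_{p\to\infty}E_p^\sigma(u_p)$.

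The main obstacle is thus the $L^p$-to-$L^q$ bootstrap for the gradients at fixed $\sigma$: one must convert an $L^p$-type control of the truncated quantity $\{|\nabla u_p|^2-\sigma\}_+^{1/2}$ into genuine $L^q$-control of $\nabla u_p$ itself, uniformly in $p$. The elementary inequality above handles this at the cost of a $\sigma^{1/2}$-error, which is harmless since $p$ is sent to infinity at $\sigma$ held fixed. Once the bootstrap is established, the remainder of the argument is the standard combination of weak lower semicontinuity of convex integrals and the monotone-convergence theorem for $\Gamma$-limits, and the commutativity of the diagram is then automatic because both compositions deliver $E_\infty$.
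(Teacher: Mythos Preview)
Your proof is correct and follows essentially the same route as the paper: both decompose the square into its four arrows, normalize by taking $p$-th roots, and treat the genuinely new top arrow $E_p^\sigma\xrightarrow{\Gamma}E_\infty^\sigma$ by the same mechanism (constant recovery sequence for the $\limsup$; H\"older bootstrap from the truncated quantity to $\|\nabla u_p\|_{L^q}$, weak compactness, weak lower semicontinuity of the convex integrand, then $q\to\infty$ for the $\liminf$).

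The one point of genuine difference is the two vertical arrows $\sigma\to 0$. The paper argues directly, using the pointwise inequality $\{|\nabla u|^2-\sigma\}_+\le |\nabla u|^2$ for the recovery sequence and a Fatou-type step for the $\liminf$; you instead invoke the monotone-convergence principle for $\Gamma$-limits. Your abstract route is a little cleaner and sidesteps a delicate issue in the paper's Fatou step (uniform convergence of $u_\sigma$ does not by itself give pointwise convergence of the integrands $\{|\nabla u_\sigma|^2-\sigma\}_+^{p/2}$). Just note that the monotone principle in the form ``$\Gamma$-limit $=$ lsc envelope of the pointwise supremum'' requires each approximant $E_p^\sigma$ and $E_\infty^\sigma$ to be lower semicontinuous on $C(\overline{\Omega})$, not only the limits $E_p$, $E_\infty$; this follows from exactly the same convexity/Lipschitz arguments you gave for the limits, so there is no real gap.
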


The article is structured as follows. In Section \ref{sec:varint} we
introduce the variational integral (with $\sigma =1$ for simplicity),
and prove the existence of a unique minimizer. A comparison result is
established for viscosity solutions of the corresponding
Euler-Lagrange equation. In Section \ref{sec:Jensen} we introduce
Jensen's Auxiliary Equations,  and complete the proof of Theorem
\ref{thm:thegoodstuffs}. Section \ref{sec:gamma} is dedicated to the
$\Gamma$-convergence of the corresponding functionals, and a proof of
Theorem \ref{thm:gammaconvergences}.  

\subsection{Notation}
We write:  $\{u\}_+=\max\{u, 0\}$, and by $\langle a, b \rangle$ we denote
the Euclidean inner product of vectors $a, b \in \mathbb{R}^n$. 
The set $\Omega\subset\mathbb{R}^n$ will always be open and bounded,
with the diameter:
\[ \text{diam}\, \Omega = \sup_{x, y \in \Omega}|x-y|. \] 
For integrable functions $u:\Omega\to\mathbb{R}$, we let
$u_{\Omega}=\fint_\Omega u \ dx$ stand for the average of $u$ on $\Omega$.
For $p\in [1,\infty)$, the space $W^{1, p}(\Omega)$ is the Sobolev space of functions $u$ with the weak gradient 
$\nabla u =(u_{x_1}, u_{x_2}, \ldots, u_{x_n})$, equipped with the norm,
whenever finite: 
\begin{equation} \label{eq:sobnorm}
||u||_{W^{1, p}(\Omega)}=\left(\int_{\Omega}|u|^p \ dx \right)^{\frac{1}{p}} +\left(\int_{\Omega}|\nabla u|^p \ dx \right)^{\frac{1}{p}}.
\end{equation} 
The subspace  $W^{1, p}_0(\Omega)$ of $W^{1,p}(\Omega)$ coincides with the closure of $C^{\infty}_0(\Omega)$ in the
norm \eqref{eq:sobnorm}.
The Sobolev space $W^{1, \infty}(\Omega)$ is equipped with the norm,
whenever finite:
\begin{equation*}
||u||_{W^{1, \infty}(\Omega)}=\mbox{ess.sup}_{\Omega}|u|+\mbox{ess.sup}_{\Omega}|\nabla u|.
\end{equation*} 
Finally, by $\operatorname{LSC}(\Omega)$ we denote the linear space of
lower semicontinuous functions from $\Omega$ to
$\mathbb{R}\cup\{+\infty\}$,  and $\operatorname{USC}(\Omega) = - \operatorname{LSC}(\Omega)$
denotes the space of the upper semicontinuous functions. 

\section{The truncated energy functional} \label{sec:varint}

In this section, we consider the energy functionals:
\begin{equation} \label{eq:varint}
E_p^{\sigma}(u)=\int_{\Omega} \{|\nabla u|^2-\sigma\}_+^\frac{p}{2} \ dx \quad \mbox{
  for } u\in W^{1,p}(\Omega).
\end{equation}
We prove existence of the minimizers and comparison principle and
derive the weak formulation of (\ref{eq:varint}). For the sake of the
proofs, it suffices to treat the case $\sigma=1$.

\begin{lemma} \label{exists}
Let $p>2$ and let $f \in W^{1, p}(\Omega)$. Then there exists at least
one minimizer of the energy $E_p^1$ in \eqref{eq:varint} on $\{u\in
W^{1,p}(\Omega); ~ u-f \in W_0^{1, p}(\Omega)\}$.
\end{lemma}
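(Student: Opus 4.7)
The plan is to apply the direct method of the calculus of variations, which requires three ingredients: coercivity of $E_p^1$ on the admissible class (to extract a bounded minimizing sequence), weak compactness in $W^{1,p}(\Omega)$, and weak lower semicontinuity of $E_p^1$.

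First, I would establish a pointwise comparison between $|\xi|^p$ and $\{|\xi|^2-1\}_+^{p/2}$ that yields coercivity. When $|\xi|^2\ge 2$, one has $|\xi|^2-1\ge |\xi|^2/2$, hence $\{|\xi|^2-1\}_+^{p/2}\ge 2^{-p/2}|\xi|^p$; when $|\xi|^2<2$, trivially $|\xi|^p\le 2^{p/2}$. Combining,
\[
|\xi|^p\le 2^{p/2}\bigl(\{|\xi|^2-1\}_+^{p/2}+1\bigr),
\]
so that $\|\nabla u\|_{L^p}^p\le 2^{p/2}\bigl(E_p^1(u)+|\Omega|\bigr)$ for every admissible $u$. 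Applied along a minimizing sequence $\{u_k\}$ with $u_k-f\in W_0^{1,p}(\Omega)$, this controls $\|\nabla u_k\|_{L^p}$. Then the Poincar\'e inequality applied to $u_k-f\in W_0^{1,p}(\Omega)$ bounds $\|u_k-f\|_{L^p}$, and the triangle inequality bounds $\|u_k\|_{W^{1,p}(\Omega)}$.

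Next, since $p>2$, the space $W^{1,p}(\Omega)$ is reflexive, so a subsequence (not relabeled) satisfies $u_k\rightharpoonup u_*$ weakly in $W^{1,p}(\Omega)$. The admissible class $\{u\in W^{1,p}(\Omega);\ u-f\in W_0^{1,p}(\Omega)\}$ is convex and norm-closed, hence weakly closed, and $u_*$ belongs to it.

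Weak lower semicontinuity then follows from convexity of the integrand $\xi\mapsto\{|\xi|^2-1\}_+^{p/2}$. Writing $h(t)=\{t^2-1\}_+^{p/2}$ for $t\ge 0$, one checks that $h\equiv 0$ on $[0,1]$ and $h''(t)=p(t^2-1)^{p/2-1}+p(p-2)t^2(t^2-1)^{p/2-2}\ge 0$ on $(1,\infty)$ for $p>2$, so $h$ is convex and nondecreasing on $[0,\infty)$; since the Euclidean norm $\xi\mapsto|\xi|$ is convex and $h$ is nondecreasing and convex, the composition $\xi\mapsto h(|\xi|)$ is convex (and continuous). Tonelli's theorem then gives
\[
E_p^1(u_*)\le\liminf_{k\to\infty}E_p^1(u_k)=\inf\bigl\{E_p^1(u);\ u-f\in W_0^{1,p}(\Omega)\bigr\},
\]
so $u_*$ is a minimizer.

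The only step that requires genuine care is the convexity of the integrand: the function $h$ vanishes identically on $[0,1]$ before becoming strictly convex on $(1,\infty)$, so one must verify that $h'(1^+)=0$ matches $h'(1^-)=0$ to conclude $h\in C^1([0,\infty))$ and is globally convex. The coercivity estimate is elementary, and once both facts are in hand the direct method proceeds in the standard way.
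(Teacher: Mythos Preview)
Your proof is correct and follows essentially the same approach as the paper: both establish coercivity via the elementary pointwise inequality $|\xi|^p \le C\bigl(\{|\xi|^2-1\}_+^{p/2}+1\bigr)$, then invoke the direct method using sequential weak lower semicontinuity of $E_p^1$, which follows from convexity of the integrand $\xi\mapsto\{|\xi|^2-1\}_+^{p/2}$. The paper cites Morrey's theorem from \cite{Dacorogna} for the weak lower semicontinuity where you cite Tonelli, and it asserts convexity without the verification you supply, but the strategy is identical.
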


\begin{proof}
Observe that:
$$\int_\Omega |\nabla u|^p \ dx \leq \int_\Omega\big|
\{|\nabla u|^2-1\}_+ + 1\big|^{\frac{p}{2}} \ dx \leq C_{\Omega, p}
\big(E_p^1(u) + 1\big).$$
Existence of a minimizer to (\ref{eq:varint}) follows then by the
direct method of calculus of variations, in view of $E_p^1$ being sequentially weakly lower semicontinuous on
$W^{1,p}(\Omega)$, which is a consequence of the
classical Morrey's theorem \cite{Dacorogna} since the 
integrand density function: 
\begin{equation}\label{f}
f:\mathbb{R}^n\to [0,\infty), \qquad f(a) = \{|a|^2-1\}_+^{\frac{p}{2}}
\end{equation}
is  continuous and convex.
\end{proof}

\begin{lemma} \label{th:equivalences}
Let $p>2$ and let $u \in W^{1, p}(\Omega)$. The following conditions are equivalent:
\begin{itemize}
\item[(i)] $E^1_p(u)\leq E^1_p(w)$ for all $w\in W^{1,p}(\Omega)$ such
  that $w-u\in W^{1,p}_0(\Omega)$.
\item[(ii)] For every $v\in W^{1,p}_0(\Omega)$ there holds:
\begin{equation} \label{eq:weakeq}
\int_{\Omega} \{|\nabla u|^2-1\}_+^{\frac{p}{2}-1} \langle \nabla u,  \nabla v\rangle \ dx =0.
\end{equation}
\end{itemize}
\end{lemma}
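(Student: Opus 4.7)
The plan is to establish the standard variational equivalence for a convex, $C^1$ integrand. I would first record that the density $f(a) = \{|a|^2-1\}_+^{p/2}$ from \eqref{f} is of class $C^1(\mathbb{R}^n)$, with
\[ \nabla f(a) = p\,\{|a|^2-1\}_+^{p/2-1}\,a, \]
the only point requiring comment being the matching of this formula across the unit sphere $\{|a|=1\}$, which is harmless since $p/2 > 1$. Convexity of $f$, already used in Lemma \ref{exists}, then gives the subgradient inequality
\[ f(b) \geq f(a) + \langle \nabla f(a),\, b-a\rangle \qquad \text{for all } a,b\in\mathbb{R}^n, \]
which will be the engine for the direction (ii)$\Rightarrow$(i).

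For (i)$\Rightarrow$(ii), I fix any $v \in W^{1,p}_0(\Omega)$ and observe that the real-valued map $t\mapsto E_p^1(u+tv)$ attains its minimum at $t=0$, so its derivative there must vanish. Formal differentiation under the integral sign yields (ii), up to the harmless multiplicative constant $p$. The only substantive step is the interchange of $\frac{d}{dt}\bigr|_{t=0}$ and $\int_\Omega$: by the mean value theorem together with the pointwise bound $|\nabla f(a)| \leq C|a|^{p-1}$, for $|t|\leq 1$ the difference quotients $\bigl(f(\nabla u + t\nabla v) - f(\nabla u)\bigr)/t$ are dominated pointwise by $C\bigl(|\nabla u|^{p-1} + |\nabla v|^{p-1}\bigr)|\nabla v|$, which is integrable by H\"older's inequality since $u,v \in W^{1,p}(\Omega)$. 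Dominated convergence then delivers (ii).

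For (ii)$\Rightarrow$(i), given any competitor $w$ with $w-u \in W^{1,p}_0(\Omega)$, I apply the subgradient inequality pointwise with $a = \nabla u(x)$ and $b = \nabla w(x)$ and integrate over $\Omega$, obtaining
\[ E_p^1(w) - E_p^1(u) \geq p\int_\Omega \{|\nabla u|^2 - 1\}_+^{p/2-1}\,\langle \nabla u,\, \nabla(w-u)\rangle \ dx. \]
The right-hand side equals zero by (ii) applied to the admissible test function $v = w-u$. Hence $E_p^1(u)\leq E_p^1(w)$, as required.

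The only technical point is the verification of $C^1$-regularity of $f$ across $\{|a|=1\}$ and of the dominated-convergence bound above; both are routine given $p/2 > 1$ and the order-$p$ polynomial growth of $f$, and no ingredient beyond the $W^{1,p}$-integrability of $u$ and $v$ is used.
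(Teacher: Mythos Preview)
Your proof is correct and follows essentially the same approach as the paper. The (ii)$\Rightarrow$(i) direction is identical (pointwise subgradient inequality, integrate, use (ii) on $v=w-u$); for (i)$\Rightarrow$(ii) the paper applies the convexity inequality at the perturbed point $\nabla u+\epsilon\nabla\phi$ to obtain a one-sided bound and then passes $\epsilon\to 0$, whereas you differentiate $t\mapsto E_p^1(u+tv)$ directly and justify the interchange via a mean-value bound---both are standard and rely on the same $L^1$ domination, so this is only a cosmetic difference.
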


\begin{proof}
By inspecting the convex density function $f$ in (\ref{f}), we see that:
\begin{equation}\label{conv_f}
\{|a|^2-1\}^{\frac{p}{2}}_+ \geq \{|b|^2-1\}^{\frac{p}{2}}_++
p\{|b|^2-1\}_+^{\frac{p}{2}-1}\langle b, a-b\rangle\qquad \mbox{for all }\, a,b\in\R^n. 
\end{equation}

Assume that $u$ is minimizing as in (i). Then, for every $\phi\in \mathcal{C}_0^\infty(\Omega)$
and every $|\epsilon|\leq 1$ we have, in view of (\ref{conv_f}):
$$E_p^1(u)\leq E_p^1(u+\epsilon\phi) \leq E_p^1 (u) + \epsilon
p\int_\Omega \{|\nabla u + \epsilon\nabla
\phi|^2-1\}_+^{\frac{p}{2}-1} \langle \nabla u+\epsilon \nabla \phi,
\nabla \phi\rangle \ dx, $$
which implies:
$$ (\mbox{sgn}\,\epsilon) \cdot \int_\Omega \{|\nabla u + \epsilon\nabla
\phi|^2-1\}_+^{\frac{p}{2}-1} \langle \nabla u+\epsilon \nabla \phi,
\nabla \phi\rangle \ dx \geq 0. $$
Since the  integrands above are dominated by $C(|\nabla u|^p+1)\in L^1(\Omega)$,
passing to the limit with $\epsilon\to 0$ results in (ii). The same
argument can then be extended to all test
functions $v\in W^{1,p}_0(\Omega)$, by density and because $\{|\nabla u|^2-1\}_+^{\frac{p}{2}-1}
\nabla u\in L^{\frac{p}{p-1}}(\Omega)$. 

Conversely, if (ii) holds, then (\ref{conv_f}) yields for every $w$ as in (i):
\begin{equation*}
0 = p \int_{\Omega} \{|\nabla u|^2-1\}_+^{\frac{p}{2}-1} \langle
\nabla u, \nabla w - \nabla u\rangle \ dx \leq E_p^1(w) - E^1_p(u).
\end{equation*}
This concludes the proof.
\end{proof}

We further have the following comparison principle:

\begin{theorem} \label{thm:Comparison}
Let $p>2$ and assume that $u_1, u_2 \in W^{1,p}(\Omega)$ satisfy:
\begin{equation} \label{super_sub_weak}
\begin{split}
\int_{\Omega} \{|\nabla u_1|^2-1\}_+^{\frac{p}{2}-1} \langle \nabla u_1,  \nabla v\rangle \ dx \leq 0,
\qquad
& \int_{\Omega} \{|\nabla u_2|^2-1\}_+^{\frac{p}{2}-1} \langle \nabla
u_2,  \nabla v\rangle \ dx \geq 0, \\ & 
\qquad \mbox{for all } \,  v\in W_0^{1,p}(\Omega), \quad v\geq 0 \mbox{   a.e. in } \Omega.
\end{split}
\end{equation}
If $(u_1-u_2)_+\in W_0^{1,p}(\Omega)$, then $u_1\leq u_2$ a.e. in the set: 
$$\{x\in \Omega; ~ |\nabla u_1(x)|>1\}\cup \{x\in \Omega; ~ |\nabla u_2(x)|>1\}.$$
\end{theorem}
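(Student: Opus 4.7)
My plan is to test both inequalities in \eqref{super_sub_weak} against the common non-negative function $v := (u_1 - u_2)_+$, which by assumption lies in $W_0^{1,p}(\Omega)$, and to subtract the resulting scalar inequalities. Setting $A(\xi) := \{|\xi|^2 - 1\}_+^{p/2 - 1}\xi$ and using $\nabla v = (\nabla u_1 - \nabla u_2)\chi_{\{u_1 > u_2\}}$ almost everywhere, this produces
\[
\int_{\{u_1 > u_2\}} \bigl\langle A(\nabla u_1) - A(\nabla u_2),\ \nabla u_1 - \nabla u_2 \bigr\rangle \, dx \leq 0.
\]
Since $A = \tfrac{1}{p}\nabla f$ with $f$ from \eqref{f} convex, the operator $A$ is monotone and the integrand above is pointwise non-negative; it therefore vanishes almost everywhere on $\{u_1 > u_2\}$.

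The second step is a strict-monotonicity check: I claim that $\bigl\langle A(\xi_1) - A(\xi_2),\ \xi_1 - \xi_2\bigr\rangle > 0$ whenever $\xi_1 \neq \xi_2$ and at least one of $|\xi_1|,|\xi_2|$ exceeds $1$. When exactly one of them, say $\xi_2$, satisfies $|\xi_2|\leq 1$, strict positivity follows from $A(\xi_2) = 0$ combined with the elementary bound $\langle A(\xi_1), \xi_1 - \xi_2\rangle \geq (|\xi_1|^2-1)^{p/2-1}|\xi_1|(|\xi_1|-|\xi_2|) > 0$. When both $|\xi_i| > 1$, it follows by integrating the positive-definite Hessian of $f$ along the segment from $\xi_2$ to $\xi_1$, since this segment contains an open sub-arc lying entirely in $\{|\xi| > 1\}$. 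Combined with the first step, this forces $\nabla u_1 = \nabla u_2$ a.e. on $\{u_1 > u_2\}\cap B$, where $B := \{|\nabla u_1| > 1\}\cup\{|\nabla u_2| > 1\}$; equivalently, $\nabla v = 0$ a.e. on $\{v > 0\}\cap B$, and combined with the classical identity $\nabla v = 0$ a.e. on $\{v = 0\}$ one obtains $\nabla v = 0$ a.e. on $B$.

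The main obstacle is the last step: promoting the gradient identity $\nabla v \equiv 0$ a.e. on $B$ to the pointwise conclusion $v = 0$ a.e. on $B$. This does not follow from the gradient identity alone, since a generic Sobolev function can form non-trivial plateaus $\{v = c\}$ with $c > 0$ and $\nabla v = 0$, and the $W_0^{1,p}$-boundary condition by itself cannot rule them out. The resolution must invoke the sub- and super-solution structure a second time: on a putative plateau $U\subset B$ of positive measure where $\nabla u_1 = \nabla u_2$ with $|\nabla u_i| > 1$, the vector field $A(\nabla u_1) = A(\nabla u_2)$ takes a non-zero value inside $U$ while vanishing on the surrounding degenerate zone $\Omega\setminus B$. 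Applying the weak inequalities of \eqref{super_sub_weak} to suitable non-negative cut-offs straddling $\partial U$, and exploiting the distributional signs $\mathrm{div}\, A(\nabla u_1)\geq 0$ and $\mathrm{div}\, A(\nabla u_2)\leq 0$, forces any such plateau to be incompatible with the surrounding $A \equiv 0$, producing the required contradiction and hence $|\{u_1 > u_2\}\cap B| = 0$.
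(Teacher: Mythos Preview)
Your first two steps are correct and coincide with the paper's Step~1: testing with $v=(u_1-u_2)_+$, using monotonicity of $A(\xi)=\{|\xi|^2-1\}_+^{p/2-1}\xi$, and checking strict monotonicity off the degenerate ball yields $\nabla u_1=\nabla u_2$ a.e.\ on $\{u_1>u_2\}\cap B$.

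The gap is in your final paragraph. You correctly identify that $\nabla v=0$ on $B$ does not by itself force $v=0$ on $B$, and that the sub/supersolution structure must be used a second time. But your proposed resolution is not a proof. Two concrete problems: (i) your claim that a plateau $U\subset B$ is ``surrounded'' by the degenerate zone $\Omega\setminus B$ is unjustified --- $U$ may equally well border regions of $B$ where $u_1\le u_2$ and $A(\nabla u_i)\neq 0$; (ii) the phrase ``suitable non-negative cut-offs straddling $\partial U$'' is not a construction. If $\phi$ is a generic bump equal to $1$ on $U$ and supported nearby, then testing $\int A(\nabla u_i)\cdot\nabla\phi\le 0$ or $\ge 0$ gives no information, since $\nabla\phi$ lives precisely where you have no control over $A(\nabla u_i)$.

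The paper's second step is a specific level-set construction. For each $c\in\mathbb{R}$ it tests the supersolution inequality for $u_2$ against $v_c:=\{\min\{u_1,c\}-u_2\}_+\in W_0^{1,p}(\Omega)$ and the subsolution inequality for $u_1$ against $\bar v_c:=\{u_1-\max\{u_2,c\}\}_+$. The point is that, after your Step~1, on $\{c\ge u_1>u_2\}$ one has $\nabla v_c=\nabla u_1-\nabla u_2=0$ wherever $A(\nabla u_2)\neq 0$, while on $\{u_1>c>u_2\}$ one has $\nabla v_c=-\nabla u_2$. Hence
\[
0\le \int_\Omega \langle A(\nabla u_2),\nabla v_c\rangle\,dx
= -\int_{\{u_1>c>u_2\}\cap\{|\nabla u_2|>1\}} (|\nabla u_2|^2-1)^{\frac{p}{2}-1}|\nabla u_2|^2\,dx,
\]
forcing $|\{u_1>c>u_2\}\cap\{|\nabla u_2|>1\}|=0$; the companion test with $\bar v_c$ handles $\{|\nabla u_1|>1\}$. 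Taking the union over $c\in\mathbb{Q}$ then gives $|\{u_1>u_2\}\cap B|=0$. The test functions here are built from $u_1,u_2$ themselves, not from extrinsic cut-offs, and it is exactly the gradient identity from Step~1 that makes the computation close up.
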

\begin{proof}
{\bf 1.}  We first show that:
\begin{equation}\label{inproof3}
\nabla u_1 = \nabla u_2 \qquad \mbox{a.e. in } \ \Omega\setminus A,
\end{equation}
where we define:
$$A= \{x\in\Omega; ~ |\nabla  u_1(x)|\leq 1\}\cap \{x\in \Omega;~
|\nabla  u_2(x)|\leq 1\}.$$
Applying (\ref{eq:weakeq}) to $v=(u_1-u_2)_+$, it follows that:
\begin{equation}\label{inproof}
\begin{split}
0 & \geq  \int_\Omega \langle \{|\nabla u_1|^2-1\}_+^{\frac{p}{2}-1}\nabla
u_1 - \{|\nabla u_2|^2-1\}_+^{\frac{p}{2}-1}\nabla u_2, \nabla
(u_1-u_2)_+\rangle \ dx \\ & = \int_{\{u_1>u_2\}\setminus A} \langle \{|\nabla u_1|^2-1\}_+^{\frac{p}{2}-1}\nabla
u_1 - \{|\nabla u_2|^2-1\}_+^{\frac{p}{2}-1}\nabla u_2, \nabla u_1-\nabla u_2 \rangle \ dx, 
\end{split}
\end{equation}

For any $a,b\in\R^n$, consider now the expression:
$$\varphi_{a,b} = \langle \{|a|^2-1\}_+^{\frac{p}{2}-1} a - \{|b|^2-1\}_+^{\frac{p}{2}-1}b, a-b \rangle. $$
We claim that $\varphi_{a,b}\geq 0$ and that if $|a|>1$ or $|b|>1$
then $\varphi_{a,b}=0$ implies $a=b$. Indeed, when $|a|>1$ and
$|b|\leq 1$, then $\langle a, a-b\rangle = |a|^2 - \langle a, b\rangle
\geq |a|^2 - |a| |b|>0$. On the other hand, for $|a|, |b|> 1$, a
direct calculation yields:
\begin{equation*}
\begin{split}
\varphi_{a,b} & = \langle \big||a|^2-1\big|^{\frac{p}{2}-1} a -
\big||b|^2-1\big|^{\frac{p}{2}-1}b, a-b \rangle \\ & =
\frac{1}{2}\Big( \big||a|^2-1\big|^{\frac{p}{2}-1} + \big||b|^2-1\big|^{\frac{p}{2}-1}\Big) |a-b|^2 +  
\frac{1}{2}\Big( \big||a|^2-1\big|^{\frac{p}{2}-1} -
\big||b|^2-1\big|^{\frac{p}{2}-1}\Big) \big(|a|^2 - |b|^2\big). 
\end{split}
\end{equation*}
The right hand side above is nonnegative and it equals $0$
only when both terms are null, hence when $a=b$. This concludes the
proof of (\ref{inproof3}) by (\ref{inproof}).

\medskip

{\bf 2.} Observe now that for every $c\in\mathbb{R}$, the following
are nonnegative admissible test functions for (\ref{eq:weakeq}):
\begin{equation}\label{WC}
\begin{split}
& v_c = \big\{\min\{u_1, c\} - u_2\big\}_+ = \min\big\{\{u_1-u_2\}_+,
\{c-u_2\}_+\big\} \in W_0^{1,p}(\Omega), \\ &
\bar v_c = \big\{u_1 - \max\{u_2, c\} \big\}_+ = \min\big\{\{u_1-u_2\}_+, \{u_1-c\}_+\big\} \in W_0^{1,p}(\Omega),
\end{split}
\end{equation}
To this end, let $\{\phi^n\}_{n=1}^\infty$ be a sequence in
$\mathcal{C}_0^\infty(\Omega)$, converging to $(u_1- u_2)_+$ 
in $W^{1,p}(\Omega)$. Then $v_c^n = \min\big\{\phi^n, \{c-u_2\}_+\big\}$ and 
$\bar v_c^n = \min\big\{\phi^n, \{u_1-c\}_+\big\}$ belong to 
$W_0^{1,p}(\Omega)$, because:
$$(\mbox{supp}\, v_c^n)\cup (\mbox{supp}\, v_c^n) \subset
\mbox{supp}\, \phi^n\subset \Omega.$$ 
Since the operations $\min /
\max$ are continuous with respect to the $W^{1,p}(\Omega)$ norm, it
follows that $v_c^n\to v_c$ and $\bar v_c^n\to \bar v_c$ in $W^{1,p}(\Omega)$, proving (\ref{WC}).
Further, condition (\ref{inproof3}) implies that:
$$\nabla v_c = \left\{\begin{array}{ll} \nabla u_1 - \nabla u_2 &  \mbox{ when } c\geq u_1>u_2\\
- \nabla u_2 &  \mbox{ when } u_1> c > u_2\\
0 &  \mbox{ otherwise} \end{array} \right. = 
\left\{\begin{array}{ll} - \nabla u_2 &  \mbox{ when } u_1> c > u_2\\
0 &  \mbox{ otherwise} \end{array} \right.$$
together with:
$$\nabla \bar v_c = \left\{\begin{array}{ll} \nabla u_1 - \nabla u_2 &
    \mbox{ when } u_1>u_2\geq c\\
\nabla u_1 &  \mbox{ when } u_1> c > u_2\\
0 &  \mbox{ otherwise} \end{array} \right. = 
\left\{\begin{array}{ll}  \nabla u_1 &  \mbox{ when } u_1> c > u_2\\
0 &  \mbox{ otherwise.} \end{array} \right.$$

We may thus use the assumption (\ref{super_sub_weak}) to get: 
\begin{equation}\label{inproof2}
\begin{split}
0 & \leq \int_\Omega \{|\nabla u_2|^2-1\}_+^{\frac{p}{2}-1}\langle \nabla u_2, \nabla
v_c\rangle \ dx = - \int_{\{u_1>c>u_2\}\cap \{|\nabla
  u_2|>1\}}  \{|\nabla u_2|^2-1\}_+^{\frac{p}{2}-1} |\nabla
u_2|^2  \ dx, \\
0 & \geq \int_\Omega \{|\nabla u_1|^2-1\}_+^{\frac{p}{2}-1}\langle\nabla u_1, \nabla
\bar v_c\rangle \ dx =  \int_{\{u_1>c>u_2\}\cap \{|\nabla
  u_1|>1\}}  \{|\nabla u_1|^2-1\}_+^{\frac{p}{2}-1}|\nabla u_1|^2 \ dx.
\end{split}
\end{equation}
Since both integrands in the right hand sides above are strictly
positive, we conclude that there must be:
$$ \big| \{x\in\Omega; ~ u_1(x)<c<u_2(x)\}\cap (\Omega\setminus
A)\big| = 0 \qquad \mbox{ for all } \, c\in\mathbb{R},$$
achieving the proof.
\end{proof}

\begin{corollary}
Let $p>2$ and let $u_1, u_2$ be two minimizers of $E_p^1$ on
$\{u\in W^{1,p}(\Omega); ~ u-f \in W_0^{1, p}(\Omega)\}$, for some 
$f \in W^{1, p}(\Omega)$. Then $ u_1=u_2$ a.e. in the set:
$$\{x\in \Omega; ~ |\nabla  u_1(x) |>1\}\cup\{x\in \Omega; ~ |\nabla  u_2|>1\}.$$
\end{corollary}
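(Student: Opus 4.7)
The plan is to deduce the corollary directly from Theorem \ref{thm:Comparison} by applying it twice, with the roles of $u_1$ and $u_2$ swapped. First I would invoke Lemma \ref{th:equivalences}: since $u_1$ and $u_2$ are minimizers of $E_p^1$ (with the same boundary data $f$), each of them satisfies the Euler--Lagrange \emph{equality} \eqref{eq:weakeq}. In particular, each satisfies simultaneously both inequalities in \eqref{super_sub_weak}, so the pair $(u_1, u_2)$ and the pair $(u_2, u_1)$ both fit the hypotheses of Theorem \ref{thm:Comparison}, provided the requisite truncation lies in $W_0^{1,p}(\Omega)$.

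Next I would check this boundary trace condition. Because $u_1 - f, \, u_2 - f \in W^{1,p}_0(\Omega)$, their difference $u_1 - u_2 = (u_1 - f) - (u_2 - f)$ also lies in $W_0^{1,p}(\Omega)$. Taking positive parts is a continuous operation on $W^{1,p}_0(\Omega)$ (e.g.\ via approximation by $\mathcal{C}_0^\infty$ functions and continuity of truncation in $W^{1,p}$-norm, as already used in the proof of Theorem \ref{thm:Comparison}), so both $(u_1 - u_2)_+$ and $(u_2 - u_1)_+$ belong to $W^{1,p}_0(\Omega)$.

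Now I apply Theorem \ref{thm:Comparison} to the pair $(u_1, u_2)$ to conclude that $u_1 \leq u_2$ a.e.\ on $\{|\nabla u_1| > 1\}\cup\{|\nabla u_2| > 1\}$. Then I apply it again, with the roles reversed, to the pair $(u_2, u_1)$, obtaining $u_2 \leq u_1$ a.e.\ on the same set (the set is symmetric in $u_1$ and $u_2$). Combining the two inequalities yields $u_1 = u_2$ a.e.\ on $\{|\nabla u_1| > 1\}\cup\{|\nabla u_2| > 1\}$, as claimed.

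There is essentially no obstacle here: the corollary is a one-line consequence of Theorem \ref{thm:Comparison} once one observes that a minimizer satisfies the Euler--Lagrange equation with equality, hence qualifies simultaneously as a sub- and supersolution in the sense of \eqref{super_sub_weak}. The only point requiring mild care is the stability of $W_0^{1,p}$ under the operation $v \mapsto v_+$, which has already been handled in the proof above.
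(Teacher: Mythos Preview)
Your proposal is correct and follows exactly the paper's approach: invoke Lemma \ref{th:equivalences} so that each minimizer satisfies both inequalities in \eqref{super_sub_weak}, note $(u_1-u_2)_+,(u_2-u_1)_+\in W_0^{1,p}(\Omega)$, and apply Theorem \ref{thm:Comparison} in both directions. The paper's proof is the same one-line argument, just stated more tersely.
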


\begin{proof}
Since $u_1,u_2$ satisfy both conditions in (\ref{super_sub_weak}) by Lemma
\ref{th:equivalences} and also $(u_1-u_2)_+, (u_2-u_1)_+\in W_0^{1,p}(\Omega)$, Theorem
\ref{thm:Comparison} implies the claim.
\end{proof}

\medskip

We close this section by noting that the Euler-Lagrange equations of (\ref{eq:varint}) are
obtained by integrating by parts in its weak formulation:
\begin{equation}\label{2.33}
\int_\Omega \{|\nabla u|^2-\sigma\}_+^{\frac{p}{2}-1}\langle \nabla u,
\nabla v\rangle \ dx = 0 \qquad \mbox{for all } \ v\in W_0^{1,p}(\Omega)
\end{equation}
namely:
\begin{equation} \label{eq:strongeq}
L_p^\sigma u:=\{|\nabla u|^2-\sigma\}_+^{\frac{p-4}{2}}\Big(\{|\nabla
u|^2-\sigma\}_+\Delta u + (p-2) \Delta_{\infty}u\Big) =0. 
\end{equation}
where $\Delta_\infty$ is defined as in (\ref{eq:introinfty}) and with the
convention that $\{a\}_+^{\frac{p-4}{2}}=0$ whenever $a\leq 0$.

\section{Two limit procedures}

\begin{definition}\label{sub_super}
We say that $u \in \operatorname{LSC}(\Omega)$ that is not
equivalently  $+\infty$ in $\Omega$, is a viscosity supersolution of
\eqref{eq:strongeq} if, whenever $u-\phi $ has a local minimum at some
$x_0\in\Omega$ and for some $\phi\in\mathcal{C}^\infty(\Omega)$, it results in:
\begin{align*}
L_p^\sigma \phi(x_0)\leq 0.
\end{align*}
Also, $u \in \operatorname{USC}(\Omega)$  that is not
equivalently $-\infty$ in $\Omega$,
is a viscosity subsolution of \eqref{eq:strongeq} if $(-u)$ is a viscosity supersolution.
A continuous function $u:\Omega\to\R$ is called a viscosity solution
if it is both a viscosity sub- and supersolution.
\end{definition}

\begin{theorem}\label{th_cont}
If $u\in W^{1,p}(\Omega)$ is a continuous function satisfying
\eqref{2.33}, then it is a viscosity solution to \eqref{eq:strongeq}.
\end{theorem}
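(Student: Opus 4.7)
The plan is to argue by contradiction: assuming that $u$ fails the supersolution inequality at some test function $\phi$ touching from below, I construct a non-negative $v\in W_0^{1,p}(\Omega)$ that yields strictly opposed inequalities when inserted into \eqref{2.33} versus the classical identity for the smooth barrier $\phi$. The subsolution case is symmetric, since a direct computation gives $L_p^\sigma(-\phi)=-L_p^\sigma\phi$ for smooth $\phi$, so applying the supersolution argument to $-u$ recovers the subsolution property of $u$.

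Suppose therefore that there exist $\phi\in\mathcal{C}^\infty(\Omega)$ and $x_0\in\Omega$ with $u-\phi$ attaining a local minimum at $x_0$ but $L_p^\sigma\phi(x_0)>0$. After replacing $\phi$ by $\phi(x)-|x-x_0|^4+u(x_0)-\phi(x_0)$, which leaves $\nabla\phi(x_0)$ and $D^2\phi(x_0)$, hence $L_p^\sigma\phi(x_0)$, unchanged, I may assume that $u(x_0)=\phi(x_0)$ and $u-\phi$ has a strict minimum at $x_0$. The positivity of $L_p^\sigma\phi(x_0)$ forces $|\nabla\phi(x_0)|^2>\sigma$, for otherwise the prefactor $\{|\nabla\phi|^2-\sigma\}_+^{(p-4)/2}$ vanishes at $x_0$; therefore in some ball $B_r(x_0)\Subset\Omega$ the inequalities $|\nabla\phi|^2>\sigma$ and $L_p^\sigma\phi>0$ hold throughout $\overline{B_r(x_0)}$. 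Set $2m:=\min_{\partial B_r(x_0)}(u-\phi)>0$ and $\tilde\phi:=\phi+m$; then $\tilde\phi(x_0)>u(x_0)$ while $\tilde\phi\leq u-m$ on $\partial B_r(x_0)$, so the open set $U:=\{x\in B_r(x_0);\ \tilde\phi(x)>u(x)\}$ is non-empty and satisfies $\overline{U}\subset B_r(x_0)$.

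The function $v:=\{\tilde\phi-u\}_+$, extended by zero outside $B_r(x_0)$, lies in $W_0^{1,p}(\Omega)$ and is supported in $\overline{U}$. Since $\{|\nabla\tilde\phi|^2-\sigma\}_+^{p/2-1}=\{|\nabla\phi|^2-\sigma\}_+^{p/2-1}$ is smooth and strictly positive on $B_r(x_0)$, classical integration by parts gives
\begin{equation*}
\int_{B_r(x_0)} \{|\nabla\tilde\phi|^2-\sigma\}_+^{\frac{p}{2}-1}\langle\nabla\tilde\phi,\nabla v\rangle\, dx = -\int_{B_r(x_0)} v\, L_p^\sigma\tilde\phi\, dx < 0,
\end{equation*}
while testing \eqref{2.33} against $v$ produces
\begin{equation*}
\int_{B_r(x_0)} \{|\nabla u|^2-\sigma\}_+^{\frac{p}{2}-1}\langle\nabla u,\nabla v\rangle\, dx = 0.
\end{equation*}
Subtracting and noting that $\nabla v=(\nabla\tilde\phi-\nabla u)\mathbf{1}_U$ a.e., the difference reduces to $\int_U \varphi_{\nabla\tilde\phi,\nabla u}\, dx$, with $\varphi_{a,b}$ as introduced in the proof of Theorem \ref{thm:Comparison}; the pointwise inequality $\varphi_{a,b}\geq 0$ established there contradicts the strict negativity above.

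The main technical subtlety is justifying the classical integration by parts against $\tilde\phi$ despite the non-smoothness of $\{\cdot\}_+^{(p-4)/2}$ on the locus $\{|\nabla\phi|^2=\sigma\}$. The key observation is that $L_p^\sigma\phi(x_0)>0$ automatically places $x_0$ strictly inside $\{|\nabla\phi|^2>\sigma\}$, so one can localize to a ball on which the prefactor is smooth and bounded away from zero. Once this localization is in place, what remains is a clean deployment of the monotonicity inequality already encapsulated in $\varphi_{a,b}$.
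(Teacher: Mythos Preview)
Your proof is correct and follows essentially the same approach as the paper: both lift the touching test function by a small constant, localize to a ball where $|\nabla\phi|^2>\sigma$ and $L_p^\sigma\phi>0$, and derive a contradiction via comparison. The only cosmetic difference is that the paper invokes the comparison principle of Theorem~\ref{thm:Comparison} as a black box to conclude $\bar\phi\leq u$, whereas you unpack that comparison directly by testing both the weak equation and the integrated-by-parts identity for $\tilde\phi$ against $v=\{\tilde\phi-u\}_+$ and appealing to the pointwise monotonicity $\varphi_{a,b}\geq 0$; your use of $|x-x_0|^4$ rather than $|x-x_0|^2$ is also a slightly cleaner way to enforce strictness of the minimum without perturbing $L_p^\sigma\phi(x_0)$.
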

\begin{proof}
Without loss of generality, it suffices to treat the case $\sigma=1$.
For a fixed $x_0 \in \Omega$, consider a supporting test function
$\phi\in\mathcal{C}^\infty(\Omega)$ such that $\phi(x_0) = u(x_0)$ and
$\phi\leq u$ in $\Omega$. By possibly modifying $\phi$ to $\phi -
|x-x_0|^2$, we may without loss of generality assume that $\phi(x) <
u(x)$ for all $x\in \Omega\setminus \{x_0\}$. 
If, by contradiction, $L_p^1\phi(x_0)>0$, then there must be for
some $r>0$:
$$|\nabla \phi(x)|>1 \, \mbox{ and } \, L_p^1\phi(x)>0 \quad \mbox{ for all }
x\in \bar B_r(x_0)\subset \Omega, $$
Let $m_r=\min_{\partial B_r(x_0)}(u-\phi)>0$ and consider the
modification $\bar\phi = \phi + m_r$. The condition $L_p\bar\phi > 0$ in
$B_r(x_0)$ is equivalent to:
$$\int_{B_r(x_0)} \{|\nabla \bar\phi|^2-1\}_+^{\frac{p}{2}-1} \langle
\nabla \bar\phi,  \nabla v\rangle \ dx \leq 0 \quad \mbox{ for all }
v\in W^{1,p}_0(\Omega), \,\, v\geq 0 \, \mbox{ a.e. in } \Omega.$$
Since $\bar\phi\leq u$ on $\partial B_r(x_0)$, the comparison
principle in Theorem \ref{thm:Comparison} implies $\bar{\phi}\leq u$ in
$B_r(x_0)$, contradicting $\phi(x_0) = u(x_0)$. This proves that $u$
is a viscosity supersolution to (\ref{eq:strongeq}). The proof of $u$
being a viscosity subsolution follows in the same manner.
\end{proof}

\medskip

We now show that the limit equation of \eqref{eq:strongeq} as $p \to \infty$ is:
\begin{equation} \label{eq:strangeinf}
\{|\nabla u|^2-\sigma|\}_+\Delta_{\infty}u =0.
\end{equation}
The viscosity sub- and supersolutions to  (\ref{eq:strangeinf})
(respectively, $\Delta_pu = 0$) are defined as in Definition
\ref{sub_super}, with the operator $L_p^\sigma$ replaced by:
$$L_\infty^\sigma u = \{|\nabla u|^2-\sigma|\}_+\Delta_{\infty}u$$
(respectively, with the operator $\Delta_p$ in (\ref{eq:plaplaceintro})).

\medskip

\begin{theorem} \label{thm:ptoinfty}
For every $\sigma >0$ and every $p>n$, let $u_p^\sigma\in
W^{1,p}(\Omega)$ be a solution to (\ref{2.33}) sa\-tis\-fy\-ing
$u_p^\sigma-f\in W_0^{1,p}(\Omega)$, where $f\in W^{1,\infty}(\Omega)$
is a given function. Then we have:
\begin{itemize}
\item[(i)] As $\sigma\to 0$, any sequence of $\{u_p^\sigma\}_\sigma$
  converges weakly in $W^{1,p}(\Omega)$, and in
  $\mathcal{C}_{loc}^\alpha(\Omega)$ for every $\alpha\in (0,
  1-\frac{n}{q})$, to the unique function $u_p$ that is a viscosity
  solution to $\Delta_p u = 0$ in $\Omega$ satisfying $u_p-f\in W_0^{1,p}(\Omega)$.
\item[(ii)] As $p\to\infty$, every sequence of $\{u_p^\sigma\}_p$
  contains a subsequence $\{u^\sigma_{p_j}\}_j$ converging
weakly in $W^{1,q}(\Omega)$ for every $q\in (1, \infty)$ and in $\mathcal{C}^\alpha_{loc}(\Omega)$ for
every $\alpha\in (0,1)$, to a viscosity solution $u^\sigma_{\infty}$ of (\ref{eq:strangeinf}). 
\end{itemize}
\end{theorem}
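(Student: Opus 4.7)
The overall strategy is to derive uniform bounds from energy minimality, extract compact subsequences, and identify the limits by passing to the limit in the weak formulation (for (i)) or in the viscosity formulation (for (ii)). Since $u_p^\sigma$ solves (\ref{2.33}) and hence minimizes $E_p^\sigma$ by Lemma \ref{th:equivalences}, comparison with $f$ gives
\[ E_p^\sigma(u_p^\sigma)\leq E_p^\sigma(f)\leq |\Omega|\,\|\nabla f\|_\infty^p, \]
and the elementary inequality $|a|^p\leq C_p(\{|a|^2-\sigma\}_+^{p/2}+\sigma^{p/2})$ yields $\|\nabla u_p^\sigma\|_{L^p}\leq C$, independently of $\sigma$. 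For part (ii), a H\"older interpolation against $\bigl(E_p^\sigma(u_p^\sigma)\bigr)^{1/p}\leq |\Omega|^{1/p}\|\nabla f\|_\infty$ upgrades this to $\|\nabla u_p^\sigma\|_{L^q}\leq C_q$ for each fixed $q\in(1,\infty)$ and $p\geq q$.

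For (i), with $p>n$ fixed, Morrey's embedding turns the $W^{1,p}$-bound into $\mathcal{C}^\alpha_{loc}$-compactness; extract $u_p^{\sigma_j}\rightharpoonup u_p$ weakly in $W^{1,p}$ and locally uniformly. To identify $u_p$, I would show it minimizes the $p$-Dirichlet energy: given any $v$ with $v-f\in W_0^{1,p}$, minimality gives $E_p^{\sigma_j}(u_p^{\sigma_j})\leq E_p^{\sigma_j}(v)\to \int_\Omega|\nabla v|^p\,dx$ by monotone convergence. For the left-hand side, fix $\sigma_0>0$; once $\sigma_j\leq\sigma_0$ we have $E_p^{\sigma_j}(u_p^{\sigma_j})\geq E_p^{\sigma_0}(u_p^{\sigma_j})$, whose $\liminf$ is bounded below by $E_p^{\sigma_0}(u_p)$ via the weak lower semicontinuity from Lemma \ref{exists}; then let $\sigma_0\to 0$ by monotone convergence. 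Hence $u_p$ is the $p$-harmonic extension of $f$; strict convexity of the $p$-Dirichlet energy gives uniqueness and thereby promotes subsequential convergence to convergence of the whole net, and $u_p$ is a viscosity solution of $\Delta_p u=0$ by the standard identification of continuous weak solutions with viscosity solutions, an argument essentially identical to Theorem \ref{th_cont}.

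For (ii), the $L^q$-bounds and a diagonal extraction over $q\uparrow\infty$, combined with Morrey's embedding, produce a subsequence $u_{p_j}^\sigma\rightharpoonup u_\infty^\sigma$ weakly in every $W^{1,q}$ and strongly in $\mathcal{C}^\alpha_{loc}$ for every $\alpha\in(0,1)$. Each $u_{p_j}^\sigma$ is continuous (since $p_j>n$), hence a viscosity solution of $L_{p_j}^\sigma u=0$ by Theorem \ref{th_cont}. The viscosity passage is then classical: suppose $\phi\in\mathcal{C}^\infty$ touches $u_\infty^\sigma$ from below at $x_0$ with $L_\infty^\sigma\phi(x_0)>0$ (made strict via $\phi\leftarrow\phi-\delta|x-x_0|^2$ with $\delta$ small enough that positivity is preserved); this forces $|\nabla\phi(x_0)|>\sqrt\sigma$ and $\Delta_\infty\phi(x_0)>0$, so on some ball $B_r(x_0)$ the prefactor $\{|\nabla\phi|^2-\sigma\}_+^{(p_j-4)/2}$ is strictly positive for all $j$. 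Uniform convergence then furnishes local minima of $u_{p_j}^\sigma-\phi$ at points $x_{p_j}\to x_0$, and the supersolution inequality $L_{p_j}^\sigma\phi(x_{p_j})\leq 0$ reduces, after cancelling the prefactor, to
\[ \{|\nabla\phi(x_{p_j})|^2-\sigma\}_+\Delta\phi(x_{p_j})+(p_j-2)\Delta_\infty\phi(x_{p_j})\leq 0, \]
which is impossible for large $j$ because the first term stays bounded while the second blows up to $+\infty$. The subsolution case is symmetric.

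The main obstacle I anticipate is the viscosity limit in (ii): one must check that the prefactor is nonzero at the touching points (which is where the case split $|\nabla\phi(x_0)|\leq\sqrt\sigma$ versus $|\nabla\phi(x_0)|>\sqrt\sigma$ in the definition of $L_\infty^\sigma$ matters, the former being vacuous), and one must arrange the strict-touching modification without destroying the positivity of $L_\infty^\sigma\phi(x_0)$, so that the sequence $x_{p_j}\to x_0$ is genuinely produced by the uniform convergence.
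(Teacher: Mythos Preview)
Your proposal is correct. The uniform bounds and the treatment of part (ii) follow essentially the same route as the paper: energy comparison with $f$, H\"older interpolation to obtain $W^{1,q}$-bounds for all $q$, Morrey for $\mathcal{C}^\alpha_{loc}$-compactness, and then the standard viscosity stability argument (the paper phrases this as a direct case split on whether $|\nabla\phi(x_{p_j}^\sigma)|^2\leq\sigma$ infinitely often, rather than by contradiction, but the content is the same).

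For part (i), however, you take a genuinely different route. The paper identifies the limit $u_p$ by the \emph{viscosity} method as well: it uses that each $u_p^{\sigma_j}$ is a viscosity solution of $L_p^{\sigma_j}u=0$ (Theorem \ref{th_cont}), passes to the limit in the supersolution inequality via the dichotomy (\ref{mor2}), and only afterwards invokes uniqueness of viscosity $p$-harmonic functions (through their coincidence with the $p$-Dirichlet minimizer) to upgrade subsequential to full convergence. You instead go the \emph{variational} route: pass to the limit directly in the minimality inequality $E_p^{\sigma_j}(u_p^{\sigma_j})\leq E_p^{\sigma_j}(v)$, using monotonicity in $\sigma$ on the left (freeze $\sigma_0$, apply weak lower semicontinuity from Lemma \ref{exists}, then let $\sigma_0\downarrow 0$) and monotone convergence on the right, to conclude that $u_p$ is the $p$-Dirichlet minimizer; the viscosity characterization is then a corollary. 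Your approach is arguably cleaner for (i): it avoids the case analysis at points where $\nabla\phi(x_0)=0$ and exploits the monotone ordering $\sigma\mapsto E_p^\sigma$, which is specific to this limit. The paper's approach has the virtue of treating (i) and (ii) by a single mechanism.
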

\begin{proof}
{\bf 1.} For a fixed $\sigma>0$ and $q>n$, observe the following uniform bound, valid
for all $p\geq q$:
\begin{equation*}
\begin{split}
\big(\int_\Omega |\nabla u_p^\sigma|^q \ dx\big)^{\frac{2}{q}} & \leq
\big(\int_\Omega\big| \{|\nabla
u_p^\sigma|^2-\sigma\}_++\sigma\big|^{\frac{q}{2}}\ dx\big)^{\frac{2}{q}} \leq 
\big(\int_\Omega \{|\nabla
u_p^\sigma|^2-\sigma\}_+^{\frac{q}{2}}\ dx\big)^{\frac{2}{q}} + C_{\Omega, q}\sigma
\\ & \leq C_{\Omega, q} \big(E_p^\sigma(u_p^\sigma)^{\frac{2}{p}} +\sigma\big) \leq
C_{\Omega, q} \big(E_p^\sigma(f)^{\frac{2}{p}} +\sigma\big) \leq
C_{\Omega, q}\big(\|\nabla f\|^2_{L^\infty(\Omega)} + \sigma\big) .
\end{split}
\end{equation*}
Moreover: 
\begin{equation*}
\begin{split}
\|u_p^\sigma\|_{L^q(\Omega)}^2 \leq C_{\Omega, q}\big(\|\nabla
u_p^\sigma - \nabla f\|^2_{L^q(\Omega)} + \|f\|^2_{L^q(\Omega)}\big)
\leq C_{\Omega, q}\big(\| f\|^2_{W^{1,\infty}(\Omega)} +\sigma\big),
\end{split}
\end{equation*}
so we may conclude that the family $\{u_p^\sigma\}_{p\geq q, ~
  \sigma\in\{0, \sigma_0)}$ is uniformly bounded in $W^{1,q}(\Omega)$.
We also recall the Morrey inequality, which yields for all $p\geq q$
and $\sigma>0$:
\begin{equation}\label{mor}
|u_p^\sigma(x_1) - u_p^\sigma(x_2)| \leq C_{n,q} |x_1-x_2|^{1-\frac{n}{q}} \|\nabla
u^\sigma_p\|_{L^q(B_r(x))} \qquad \mbox{for all } x_1,x_2\in \bar B_r(x) \subset\Omega,
\end{equation}
Finally, if $\phi\in\mathcal{C}^\infty(\Omega)$ and if $x_p^\sigma$ is
a local minimum of $u_p^\sigma-\phi$ in $\Omega$, then Theorem
\ref{th_cont} implies that:
\begin{equation*}
\{|\nabla\phi(x_{p}^\sigma)|^2-\sigma\}_+^{\frac{p-4}{2}}\left(\{|\nabla\phi(x_{p}^\sigma)|^2-\sigma\}_+\Delta
  \phi(x_{p}^\sigma) +(p-2)\Delta_{\infty}\phi(x_{p}^\sigma)\right) \leq 0,
\end{equation*}
which is equivalent to:
\begin{equation}\label{mor2}
|\nabla\phi(x_{p}^\sigma)|^2\leq \sigma \quad \mbox{ or }\quad 
(|\nabla\phi(x_{p}^\sigma)|^2-\sigma )\Delta
  \phi(x_{p}^\sigma) + (p-2)\Delta_{\infty}\phi(x_{p}^\sigma) \leq 0.
\end{equation}

\medskip

{\bf 2.} We are now in a position to prove (i). Let
$\{u_p^{\sigma_j}\}_{\sigma_j\to 0}$ be a subsequence that converges
weakly in $W^{1,p}(\Omega)$ and pointwise a.e. in $\Omega$, to some
$u_p\in W^{1,p}(\Omega)$ such that $u_p - f\in W^{1,p}_0(\Omega)$.
In view of (\ref{mor}), the said sequence is locally equibounded in $\mathcal{C}^{1-\frac{n}{q}}(\Omega)$,
so the Ascoli-Arzel\`a theorem yields its convergence (possibly up to another
subsequence), locally in $\mathcal{C}^\alpha(\Omega)$ with $\alpha\in
(0, 1-\frac{n}{q})$, to $u_p$. We now show that the limit $u_p$ is a
viscosity solution to $\Delta_pu=0$.

Let $x_0\in \Omega$ and let $\phi\in
\mathcal{C}^\infty(\Omega)$ be such that $\phi(x_0) = u_p(x_0)$
and $\phi(x)<u_p(x)$ for all $x\in \Omega\setminus \{x_0\}$.
Firstly, the local uniform convergence of $\{u_{p}^{\sigma_j}\}_j$ 
implies that the minima $x_p^{\sigma_j}$ of $u_p^{\sigma_j}-\phi$ on $\bar B_r(x_0)\subset\Omega$
must converge to $x_0$ as $\sigma_j\to\infty$. If $\nabla \phi(x_0)=0$
then automatically $\Delta_p\phi(x_0)=0$. Otherwise, it follows that
for all $j$ sufficiently large, the second condition in (\ref{mor2})
must be valid. Passing with $\sigma_j\to 0$, we obtain
$\Delta_p\phi(x_0)\leq 0$. This proves that $u_p$ is a viscosity
supersolution to $\Delta_pu=0$. The proof of subsolution follows in
the same manner.  

Since viscosity solutions are unique, as they coincide with the
minimizers of $\int_\Omega |\nabla u|^p \ dx$ with $u-f\in
W^{1,p}(\Omega)$, the limit $u_p$ is independent of the subsequence
$\{u_p^{\sigma_j}\}_j$. This concludes the proof of (i). 

\medskip

{\bf 3.} To prove (ii), we observe that, for a fixed $\sigma>0$, the
tail of any subsequence of $\{u_p^\sigma\}_\sigma$ is uniformly
bounded in every $W^{1,q}(\Omega)$ for $q\in (n, +\infty)$. By a
diagonal procedure, we may extract a subsequence
$\{u_{p_j}^\sigma\}_{p_j\to ^\infty}$ converging weakly in every
$W^{1,q}(\Omega)$ to some function $u^\sigma_\infty\in W^{1,q}(\Omega)$.
Without loss of generality, $\{u_{p_j}^\sigma\}_j$ converges also pointwise
a.e. in $\Omega$. In view of (\ref{mor}), the said sequence is locally equibounded in
$\mathcal{C}^\alpha(\Omega)$ for every $\alpha\in (0,1)$, which yields
convergence of (possibly
another subsequence of) $\{u^\sigma_{p_j}\}_j$, locally in each
$\mathcal{C}^\alpha(\Omega)$, to $u^\sigma_\infty$.

To show that $u^\sigma_\infty$ is a viscosity supersolution to
\eqref{eq:strangeinf}, let $x_0\in \Omega$ and let $\phi\in
\mathcal{C}^\infty(\Omega)$ be such that $\phi(x_0) = u_\infty(x_0)$
and $\phi(x)<u_\infty(x)$ for all $x\in \Omega\setminus \{x_0\}$. We
will deduce that:
\begin{equation} \label{gr}
\{|\nabla\phi(x_0)|^2-\sigma\}_+\Delta_{\infty}\phi(x_0) \leq 0.
\end{equation}
As before, the local uniform convergence 
implies that the minima $x_{p_j}^\sigma$ of $u_{p_j}^\sigma-\phi$ on $\bar B_r(x_0)\subset\Omega$
must converge to $x_0$ as $p_j\to\infty$. If $|\nabla \phi(x_{p_j}^\sigma)|\leq \sigma$ for infinitely many indices $j$,
then $|\nabla \phi(x_0)|\leq \sigma$, yielding (\ref{gr}). On the other hand,
if $|\nabla \phi(x_{p_j}^\sigma)|>\sigma$ for all $j$ sufficiently
large, then dividing by $p_j-2$ in the second condition in
(\ref{mor2}) and passing to the limit with $p_j\to \infty$ we obtain $\Delta_{\infty}\phi(x_0) \leq
0$, again resulting in (\ref{gr}). This ends the proof of $u_\infty ^\sigma$
being a viscosity supersolution to \eqref{eq:strangeinf}. 
The proof for subsolution follows in the same manner.
\end{proof}

\section{Jensen's Auxiliary Equations}
\label{sec:Jensen}
In this section, we prove that as $\sigma \to \infty$, the viscosity solutions of \eqref{eq:strangeinf} converges to the viscosity solutions of the $\infty$-Laplace equation $\Delta_{\infty}u=0$. Jensen's Auxiliary equations are employed to control the convergence.

We consider the variational integral
\begin{equation}
\label{eq:varintsigma}
\int_{\Omega}\frac{1}{p}\{|\nabla  u|^2-\sigma^2\}^{\frac{p}{2}}_+ -\sigma^{p-4}u \ dx,
\end{equation}
where the admissible functions $u$ are so that $u-f \in W^{1, p}_0(\Omega)$ for a fixed function $f$. We have the following. 
\begin{lemma}
	The variational integral \eqref{eq:varintsigma} has a unique minimizer.
\end{lemma}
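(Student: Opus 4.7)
The functional decomposes as $F(u) = \frac{1}{p} E_p^{\sigma^2}(u) - \sigma^{p-4}\int_\Omega u \, dx$, where $E_p^{\sigma^2}$ is precisely the energy from \eqref{eq:varint} with truncation level $\sigma^2$ in place of the earlier $\sigma$. Existence of a minimizer follows the direct-method template of Lemma~\ref{exists}: the first summand is weakly lower semicontinuous on $W^{1,p}(\Omega)$ by convexity of its integrand, and the linear summand is weakly continuous. Coercivity on the affine set $\{u : u - f \in W^{1,p}_0(\Omega)\}$ is obtained by combining the bound $\int_\Omega |\nabla u|^p\, dx \leq C(E_p^{\sigma^2}(u) + 1)$ with Poincar\'e's inequality applied to $u - f$, which controls $\|u\|_{L^1(\Omega)}$ by a sublinear power of $E_p^{\sigma^2}(u)$ plus constants depending on $f$. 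A minimizing sequence then admits a weakly convergent subsequence whose limit minimizes $F$.

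For uniqueness, suppose $u_1, u_2$ are two minimizers. Convexity of $F$ on the affine admissible class forces $(u_1+u_2)/2$ to be a minimizer as well, and hence yields pointwise a.e.\ equality in the convexity of the integrand $\phi(a) = \{|a|^2 - \sigma^2\}_+^{p/2}$. Inspecting $\phi$ in the spirit of the proof of Theorem~\ref{thm:Comparison} shows that $\phi$ is strictly convex on $\{|a|>\sigma\}$, so the identity $\phi\bigl(\tfrac{a+b}{2}\bigr) = \tfrac12\bigl(\phi(a)+\phi(b)\bigr)$ forces either $a = b$ or both $|a|,|b|\leq\sigma$. Consequently $\nabla u_1 = \nabla u_2$ a.e.\ outside the set $B := \{|\nabla u_1|\leq \sigma\}\cap \{|\nabla u_2|\leq \sigma\}$.

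The main obstacle is to exclude a nontrivial difference on $B$. This is where the positive source term $\sigma^{p-4}>0$ in the Euler--Lagrange equation
$$-\operatorname{div}\!\Big(\{|\nabla u|^2 - \sigma^2\}_+^{p/2-1}\nabla u\Big) = \sigma^{p-4}$$
plays the decisive role: on any set where $|\nabla u_i|\leq\sigma$, the flux in the divergence vanishes identically, so the source cannot be balanced. Testing the weak form against a nonnegative $\phi\in C_0^\infty(\Omega)$ supported in an open subset of $\{|\nabla u_i|\leq\sigma\}$ produces the contradiction $\sigma^{p-4}\int\phi = 0$; hence $\{|\nabla u_i|\leq\sigma\}$, and therefore $B$, has empty interior. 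Upgrading this qualitative nondegeneracy to the quantitative statement $|B|=0$, via a local regularity argument for the minimizers (so that the nondegenerate set $\{|\nabla u_i|>\sigma\}$ is open and of full measure), will be the delicate step of the proof. Once $|B|=0$, we have $\nabla(u_1-u_2) = 0$ a.e.\ in $\Omega$, and combined with $u_1-u_2 \in W^{1,p}_0(\Omega)$ this yields $u_1 = u_2$.
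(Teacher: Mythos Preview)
Your existence argument via the direct method is exactly what the paper does (its proof is a two-line sketch: direct method for existence, and ``similar to Theorem~\ref{thm:Comparison}'' for uniqueness). Your uniqueness strategy---midpoint convexity forcing $\nabla u_1=\nabla u_2$ a.e.\ outside the dead core $B$, then invoking the strictly positive source $\sigma^{p-4}$ to rule out the dead core---is in the same spirit, and indeed more explicit, than what the paper writes.

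There is, however, a genuine gap that you yourself flag: the passage from ``$B$ has empty interior'' to ``$|B|=0$''. Your test-function argument correctly shows that $\{|\nabla u_i|\le\sigma\}$ cannot contain a nonempty open set, but sets of positive Lebesgue measure with empty interior abound, so this alone does not yield $|B|=0$. The ``local regularity argument'' you allude to is not supplied and is not a formality: it would require, for instance, continuity of $\nabla u_i$ so that $\{|\nabla u_i|>\sigma\}$ is open and dense, and even then density does not imply full measure without further structure. The paper sidesteps this by asserting, immediately after the lemma and directly from the strong form \eqref{eq:strongjensen}, that $|\nabla u|>\sigma$ pointwise; but that reasoning is formal and faces the same difficulty at the level of weak solutions. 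In short, your outline is aligned with the paper's intent, but the step you label ``delicate'' is the crux, and neither your proposal nor the paper's one-line proof actually closes it.
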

\begin{proof}
The existence is a direct result of the direct method of calculus of variations. The proof of uniqueness is similar to the proof shown in Theorem \ref{thm:Comparison}.
\end{proof}
\begin{theorem}
	The minimizer of the variational integral \eqref{eq:varintsigma} has weak Euler-Lagrange equation
	\begin{equation}
	\label{eq:jensen_euler-lagrange}
	\int_{\Omega}\{|\nabla  u|^2-\sigma^2\}_+^{\frac{p}{2}-1}\langle \nabla  u, \nabla\phi\rangle -\sigma^{p-4}\phi \ dx =0,
	\end{equation}
	for every smooth $\phi$.
\end{theorem}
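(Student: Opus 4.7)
The plan is to mirror the proof of Lemma \ref{th:equivalences}, using convexity of the density plus a routine computation for the linear term to justify differentiation under the integral sign. Let $I(u)$ denote the functional in \eqref{eq:varintsigma}. For any $\phi\in \mathcal{C}_0^\infty(\Omega)$ and $|\epsilon|\le 1$, the competitor $u+\epsilon\phi$ is admissible because $(u+\epsilon\phi)-f=(u-f)+\epsilon\phi\in W_0^{1,p}(\Omega)$. Minimality of $u$ therefore gives $I(u)\le I(u+\epsilon\phi)$ for every such $\epsilon$.

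To convert this into a variational identity, I would invoke the convexity inequality \eqref{conv_f} (applied with $\sigma^2$ in place of $1$ and divided by $p$) to the density $a\mapsto \frac{1}{p}\{|a|^2-\sigma^2\}_+^{p/2}$, with $a=\nabla u$ and $b=\nabla u+\epsilon\nabla\phi$, and vice versa. Adding the trivial contribution from the linear term $-\sigma^{p-4}u$, one obtains, exactly as in Lemma \ref{th:equivalences}, the one-sided bound
\[
(\mathrm{sgn}\,\epsilon)\int_{\Omega}\Big(\{|\nabla u+\epsilon\nabla\phi|^2-\sigma^2\}_+^{p/2-1}\langle\nabla u+\epsilon\nabla\phi,\nabla\phi\rangle-\sigma^{p-4}\phi\Big)\,dx \ge 0.
\]

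Passing to the limit $\epsilon\to 0$ by dominated convergence — the integrands being uniformly controlled by $C(|\nabla u|^p+|\nabla\phi|^p+1+\sigma^{p-4}|\phi|)\in L^1(\Omega)$ — produces \eqref{eq:jensen_euler-lagrange} for every $\phi\in \mathcal{C}_0^\infty(\Omega)$. The extension to general $\phi\in W_0^{1,p}(\Omega)$, and in particular to any smooth $\phi$ for which the perturbation preserves the admissible class, follows by density, since $\{|\nabla u|^2-\sigma^2\}_+^{p/2-1}\nabla u\in L^{p/(p-1)}(\Omega)$, exactly as in the second half of Lemma \ref{th:equivalences}.

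The only nontrivial point is the same one handled in Lemma \ref{th:equivalences}: producing a uniform $L^1$ dominant for the difference quotients to justify dominated convergence. The affine perturbation $-\sigma^{p-4}u$ contributes only the $\epsilon$-independent term $-\sigma^{p-4}\phi$ and therefore introduces no additional obstacle; uniqueness of the minimizer (established in the preceding lemma) ensures that \eqref{eq:jensen_euler-lagrange} characterizes that single function.
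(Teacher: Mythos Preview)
Your argument is correct. Both the paper and you compute the first variation of \eqref{eq:varintsigma} at the minimizer, but the paper's own proof here is more informal: it simply writes $G(\epsilon)=I(u+\epsilon\phi)$, differentiates formally under the integral to obtain $G'(\epsilon)$, and concludes from $G'(0)=0$. It does not pause to justify the interchange of limit and integral.

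You instead replay the careful scheme of Lemma~\ref{th:equivalences}: use the convexity bound \eqref{conv_f} to obtain the one-sided inequality with $\mathrm{sgn}\,\epsilon$, provide an explicit $L^1$ dominant, and pass to the limit by dominated convergence. This is a genuinely more rigorous derivation of the same identity, and it has the additional virtue of making the density extension to $W_0^{1,p}(\Omega)$ transparent. The trade-off is only length: the paper's three-line computation is shorter but relies on the reader supplying exactly the justification you spell out.
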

\begin{proof}
	For $\epsilon$, define the function
	\[
	G(\epsilon)=\int_{\Omega}\frac{1}{p}\{|\nabla  u+\epsilon\phi|^2-\sigma^2\}_+^{\frac{p}{2}} -\sigma^{p-4}(u+\epsilon\phi) \ dx.
	\]
	Differentiating with respect to $\epsilon$ yields
	\[
	G'(\epsilon)=\int_{\Omega}\frac{1}{p}\frac{p}{2}\{|\nabla  u+\epsilon\phi|^2-\sigma^2\}_+^{\frac{p}{2}-1}2\langle \nabla  u, \nabla(u+\epsilon \phi)\rangle -\sigma^{p-4}\phi \ dx.
	\]
	Since $u$ was assumed to be minimizing, we must have that $G$ has a minimum at $\epsilon =0$, that is $G'(0)=0$. This is precisely \eqref{eq:jensen_euler-lagrange}.
\end{proof}

With suitable conditions on $u$, we can integrate \eqref{eq:jensen_euler-lagrange} by parts to give the strong equation
\begin{equation}
\label{eq:strongjensen}
\{|\nabla  u|^2-\sigma\}_+^{\frac{p-4}{2}}(\{|\nabla  u|^2-\sigma^2\}\Delta u +(p-2)\Delta_{\infty}u)=-\sigma^{p-4}.
\end{equation}
Note that the left hand side of the above is strictly negative, and this prohibits $\{|\nabla  u|^2-\sigma^2\}_+ =0$, and so, indeed,
\begin{equation}
\label{eq:gradientrestriction}
|\nabla  u|>\sigma.
\end{equation}
We aim at deriving the limit of \eqref{eq:strongjensen} as $p \to \infty$.
\begin{theorem}
	As $p \to \infty$ the viscosity solution of \eqref{eq:strongjensen} converge to the viscosity solution of Jensen's auxilliary equation
	\begin{equation}
	\label{eq:jensenaux1}
	\max\{2\sigma^2-|\nabla  u|^2, \Delta_{\infty}u\}=0.
	\end{equation}
\end{theorem}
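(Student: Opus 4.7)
The plan is to mirror the strategy of Theorem \ref{thm:ptoinfty}(ii): first derive uniform Sobolev bounds on the viscosity solutions $u_p$ of \eqref{eq:strongjensen}, then extract a locally uniformly convergent subsequence via Morrey embedding and Arzelà–Ascoli, and finally identify the limit $u_\infty^\sigma$ as a viscosity solution of \eqref{eq:jensenaux1} by passing to the limit in the viscosity inequalities with a test-function argument.

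For the uniform bounds I would exploit the variational characterization of $u_p$ as the minimizer of \eqref{eq:varintsigma}: comparing the energy of $u_p$ with that of the admissible competitor $f \in W^{1,\infty}(\Omega)$, and using Poincaré's inequality to absorb the linear forcing term $-\sigma^{p-4} u_p$, yields a bound on $\int_\Omega \{|\nabla u_p|^2 - \sigma^2\}_+^{p/2}\,dx$ that depends only on $f$ and $\sigma$. Arguing as at the start of the proof of Theorem \ref{thm:ptoinfty}, this transfers to uniform $W^{1,q}(\Omega)$ control for every $q \in (n,\infty)$, and hence to local equi-Hölder continuity of every exponent $\alpha \in (0,1)$ via Morrey. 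A diagonal extraction then produces a subsequence $u_{p_j} \to u_\infty^\sigma$ locally uniformly in $\Omega$ and weakly in each $W^{1,q}(\Omega)$.

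To show the supersolution property, let $\phi \in \mathcal{C}^\infty(\Omega)$ strictly touch $u_\infty^\sigma$ from below at $x_0$; by local uniform convergence, $u_{p_j} - \phi$ attains local minima at points $x_{p_j} \to x_0$. The viscosity inequality for \eqref{eq:strongjensen} at $x_{p_j}$ reads
\[
\{|\nabla\phi|^2-\sigma^2\}_+^{\frac{p_j-4}{2}}\Big((|\nabla\phi|^2-\sigma^2)\Delta\phi+(p_j-2)\Delta_\infty\phi\Big)(x_{p_j}) \leq -\sigma^{p_j-4}.
\]
Strict negativity of the right-hand side forces $|\nabla\phi(x_{p_j})|^2 > \sigma^2$, and rearranging shows that the absolute value of the parenthetical (which is $O(p_j)$ by smoothness of $\phi$) must dominate $\big(\sigma^2/(|\nabla\phi(x_{p_j})|^2-\sigma^2)\big)^{(p_j-4)/2}$. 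If $|\nabla\phi(x_0)|^2 < 2\sigma^2$, this ratio would exceed $1$ in the limit and the right-hand side would grow exponentially in $p_j$, contradicting the linear bound. Hence $|\nabla\phi(x_0)|^2 \geq 2\sigma^2$; dividing the viscosity inequality by $(p_j-2)$ and sending $p_j \to \infty$ further yields $\Delta_\infty\phi(x_0) \leq 0$, so that $\max\{2\sigma^2-|\nabla\phi|^2,\Delta_\infty\phi\}(x_0) \leq 0$. The subsolution side is dual: when $\phi$ touches from above at $x_0$ with $|\nabla\phi(x_0)|^2 \leq \sigma^2$ the $\max$-condition holds trivially, while otherwise the same exponential-versus-polynomial dichotomy forces either $|\nabla\phi(x_0)|^2 \leq 2\sigma^2$ or $\Delta_\infty\phi(x_0) \geq 0$.

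The main technical obstacle is exactly this dichotomy step: the threshold $2\sigma^2$ emerges from balancing the scale $\sigma^{p-4}$ against $\{|\nabla\phi|^2-\sigma^2\}^{(p-4)/2}$, and the borderline case $|\nabla\phi(x_0)|^2 = 2\sigma^2$ needs care, since the exponential factor is then asymptotically $1$ and one must recover the sign of $\Delta_\infty\phi(x_0)$ from the residual $O(1/p_j)$ correction after dividing by $(p_j-2)$. Uniqueness of the viscosity solution of \eqref{eq:jensenaux1} with the prescribed boundary data, which is Jensen's comparison principle for the auxiliary equation, then ensures that $u_\infty^\sigma$ is independent of the chosen subsequence, so the whole family $\{u_p\}$ converges.
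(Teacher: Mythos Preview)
Your proposal is correct and follows essentially the same route as the paper: compactness via Morrey/Arzel\`a--Ascoli to extract a locally uniformly convergent subsequence, then passage to the limit in the viscosity inequalities after dividing through by $\{|\nabla\phi|^2-\sigma^2\}_+^{(p_j-4)/2}(p_j-2)$, with the dichotomy governed by whether the ratio $\sigma^2/(|\nabla\phi|^2-\sigma^2)$ exceeds $1$. Two minor remarks: (i) your worry about the borderline $|\nabla\phi(x_0)|^2=2\sigma^2$ in the supersolution case is unnecessary, since after division the forcing term $-\tfrac{1}{p_j-2}\big(\sigma^2/(|\nabla\phi|^2-\sigma^2)\big)^{(p_j-4)/2}$ is always $\leq 0$, so one gets $\Delta_\infty\phi(x_0)\leq 0$ directly without any finer analysis; (ii) your appeal to Jensen's comparison principle to upgrade subsequential to full convergence is a welcome addition that the paper does not make explicit.
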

\begin{proof}
Standard compactness arguments, cf Theorem \ref{thm:ptoinfty}, yields the existence of a subsequence $\{u_{p_j}\}_j$ converging locally uniformly to a continuous function $u_{\infty}$. 

We first prove that $u_{\infty}$ is a viscosity subsolution. Let $u_{\infty}-\phi$ have a minimum at $x_{\infty}$. Since $u_{p_j}$ converges locally uniformly to $u_{\infty}$, we see that minima $x_{p_j}$ of $u_{p_j}-\phi$ converges uniformly to $x_{\infty}$. By the definition of viscosity subsolutions,
	\[
	\{|\nabla\phi(x_{p_j})|^2-\sigma^2\}_+^{\frac{p_j-4}{2}}(\{|\nabla\phi(x_{p_j})|^2-\sigma^2\}_+\Delta \phi(x_{p_j}) +(p_j-2)\Delta_{\infty}\phi(x_{p_j})\geq-\sigma^{p_j-4}.
	\]
	Since the left hand side is non-negative, cf. \eqref{eq:gradientrestriction}, we can divide through by $(\nabla\phi(x_{p_j})|^2-\sigma^2)^{\frac{p_j-4}{2}}(p_j-2)>0$  and rearrange to get
	\[
	\Delta_{\infty}\phi(x_{p_j})\geq -\frac{1}{p_j-2}\left(\frac{\sigma^2}{|\nabla\phi(x_{p_j})|^2-\sigma^2}\right)^{\frac{p_j-4}{2}}-(|\nabla\phi(x_{p_j})|^2-\sigma^2)\frac{\Delta\phi(x_{p_j})}{p_j-2}.
	\]
	If 
	\[
	\frac{\sigma^2}{|\nabla\phi(x_{p_j})|^2-\sigma^2}\leq 1,
	\]
	for infinitely many $j$, we see that the right hand side of the above converges to 0, and we see that $\Delta_{\infty}\phi(x_{\infty})\geq 0$ by continuity. 
	
	On the other hand, If 
	\begin{align*}
	\frac{\sigma^2}{|\nabla\phi(x_p)|^2-\sigma^2}> 1 
	\iff 2\sigma^2 -|\nabla\phi(x_p)|^2> 0,
	\end{align*}
	we see that the right hand side converges to $-\infty$, and the inequality is vacuously true. This implies that
	\[
	\max\{2\sigma^2-|\nabla\phi(x_{\infty})|^2, \Delta_{\infty}\phi(x_{\infty})\}\geq 0
	\]
	
	We now turn to supersolutions. Let $u_{\infty}-\phi$ have a maximum at $x_{\infty}$. This implies that $u_{p_j}-\phi$ has maxima at $x_{p_j}\to x_{\infty}$. We get
	\[
	\{|\nabla\phi(x_{p_j})|^2-\sigma^2\}_+^{\frac{p_j-4}{2}}(\{|\nabla\phi(x_{p_j})|^2-\sigma^2\}\Delta \phi(x_{p_j}) +(p_j-2)\Delta_{\infty}\phi(x_{p_j})\leq-\sigma^{p_j-4}.
	\]
	Arguing as above, we divide through to get
	\[
	\Delta_{\infty}\phi(x_{p_j})\leq -\frac{1}{p_j-2}\left(\frac{\sigma^2}{|\nabla\phi(x_{p_j})|^2-\sigma}\right)^{\frac{p_j-4}{2}}-(|\nabla\phi(x_{p_j})|-\sigma^2)\frac{\Delta\phi(x_{p_j})}{p_j-2}.
	\]
	If
	\[
	\frac{\sigma^2}{|\nabla\phi(x_{p_j})|^2-\sigma^2}\geq 1,
	\]
	We see that the right hand side of the above converge to $-\infty$. This is prohibited, as the left hand side $\Delta_{\infty}\phi(x_{p_j})$ remains bounded, and we get that $2\sigma^2 -|\nabla\phi(x_{p_j})|^2\leq 0$ in this case.
	
	On the other hand, if 
	\[
	\frac{\sigma^2}{|\nabla\phi(x_{p_j})|^2-\sigma^2}\leq 1,
	\]
	we get in the limit
	\[
	\Delta_{\infty}\phi(x_{\infty}) \leq 0,
	\]
	and so
	\[
	\max\{2\sigma^2 - |\nabla\phi(x_{\infty})|^2, \Delta_{\infty}\phi(x_{\infty})\}\leq 0,
	\]
	and hence $u_{\infty}$ is a viscosity solution of \eqref{eq:jensenaux1}.
\end{proof}
For the Lower Equation, the various stages are
\begin{align}
&\int_{\Omega}\frac{1}{p}\{|\nabla  u|^2-\sigma^2\}^{\frac{p}{2}}_+ +\sigma^{p-4}u \ dx, \nonumber \\
&\{|\nabla  u|^2-\sigma^2\}_+^{\frac{p-4}{2}}(\{|\nabla  u|^2-\sigma^2\}\Delta u +(p-2)\Delta_{\infty}u)=\sigma^{p-4}, \label{eq:p_lower_jensen} \\
&\min\{|\nabla  u^-|-2\sigma^2, \Delta_{\infty}u^-\}=0. \nonumber
\end{align}

We know from Theorem \ref{thm:Comparison} that viscosity solutions $u_p$ of $L^{\sigma}_pu=0$ enjoy a comparison principle. This implies the following result.
\begin{theorem}
	Let $u_p^+$ be a viscosity solution of the Upper Equation \eqref{eq:strongjensen} and $u_p^-$ be a viscosity solution of the Lower Equation \eqref{eq:p_lower_jensen}, and let $u_p$ be the viscosity solution of \eqref{eq:strongeq},  all with the same boundary values $f$. Then
	\begin{equation}
	u_p^-\leq u_p \leq u_p^+
	\end{equation}
	in the set $\Omega \setminus A_{\sigma}=\{x \in \Omega \ : \ |\nabla  u_p|>\sigma\}$.
\end{theorem}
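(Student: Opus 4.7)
The plan is to reduce the sandwich $u_p^- \leq u_p \leq u_p^+$ to two applications of the weak comparison principle from Theorem \ref{thm:Comparison}, read at truncation threshold $\sigma^2$ in place of $1$. The key observation is that Jensen's auxiliary problems yield strict one-sided weak sub- and supersolutions of precisely the operator $L_p^{\sigma^2}$ that annihilates $u_p$.

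I would begin by recasting \eqref{eq:jensen_euler-lagrange} and its lower counterpart as
\begin{equation*}
\int_\Omega \{|\nabla u_p^\pm|^2 - \sigma^2\}_+^{\frac{p}{2}-1} \langle \nabla u_p^\pm, \nabla v\rangle \, dx = \pm\, \sigma^{p-4}\int_\Omega v \, dx \qquad \text{for every } v \in W_0^{1,p}(\Omega).
\end{equation*}
Restricting to $v \geq 0$, this shows that $u_p^+$ satisfies the weak supersolution inequality appearing in Theorem \ref{thm:Comparison} (in the role of $u_2$), while $u_p^-$ satisfies the weak subsolution inequality (in the role of $u_1$). The function $u_p$ solves \eqref{eq:strongeq} and thus satisfies both inequalities with equality, so it can play either role. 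Since all three functions share the boundary datum $f$, the positive parts $(u_p^- - u_p)_+$ and $(u_p - u_p^+)_+$ lie in $W_0^{1,p}(\Omega)$.

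The argument is then completed by applying the comparison theorem twice: to the pair $(u_1, u_2) = (u_p, u_p^+)$ to obtain $u_p \leq u_p^+$, and to $(u_1, u_2) = (u_p^-, u_p)$ to obtain $u_p^- \leq u_p$. Each conclusion holds on the set where at least one of the two gradients exceeds $\sigma$; since $\Omega \setminus A_\sigma = \{|\nabla u_p| > \sigma\}$ is contained in both such sets, the claimed inequalities hold throughout this set. The main obstacle is bookkeeping: Theorem \ref{thm:Comparison} is formulated at threshold $1$, so one must verify that its proof---based on the pointwise monotonicity $\varphi_{a,b}\geq 0$ together with the truncation test functions $v_c, \bar v_c$---carries over verbatim to threshold $\sigma^2$. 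This is a straightforward rescaling and introduces no new structural difficulty; one should additionally observe that the viscosity solutions $u_p^\pm$ produced by Jensen's minimization coincide with the underlying weak Sobolev minimizers, so that the integral identities above are genuinely available.
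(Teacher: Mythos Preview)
Your proposal is correct and matches the paper's approach: the paper gives no detailed proof at all, merely asserting before the theorem that ``viscosity solutions $u_p$ of $L^{\sigma}_pu=0$ enjoy a comparison principle'' from Theorem~\ref{thm:Comparison}, and remarking afterward that the dead cores of $u_p^\pm$ are irrelevant since $|\nabla u_p^\pm|>\sigma$. Your write-up is a faithful and more careful unpacking of exactly this, including the two points the paper glosses over (the threshold rescaling from $1$ to $\sigma^2$, and the identification of the viscosity solutions with the weak minimizers so that the integral inequalities in \eqref{super_sub_weak} are available).
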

The dead cores for the Lower and Upper equation does not count, as we have $|\nabla  u_p^-|, \ |\nabla  u_p^+|>\sigma$.

Choose a subsequence so that all three converge, say
\[
u_p^- \to u^-, \ u_p \to u, \ u_p^+\to u^-.
\]
Here $u$ is a viscosity solution of equation \eqref{eq:strangeinf}.
We ignore, for the moment, the dependence upon $\sigma$ in the  notation.
Subtracting equation \eqref{eq:p_lower_jensen} from \eqref{eq:strongjensen}, and choosing $\phi=(u_p^+-u_p^-)$ as our test function, we get
\begin{equation}
\label{eq:difference}
\begin{split}
\int_{\Omega}\big\langle\{|\nabla  u_p^+|^2-\sigma^2\}_+^{\frac{p}{2}-1} \nabla  u_p^+-\{|\nabla  u_p^-|^2-\sigma^2\}_+^{\frac{p}{2}-1} \nabla  u_p^-, \nabla(u_p^+-u_p^-)\big\rangle \ dx \\
=  \sigma^{p-4}\int_{\Omega} (u_p^+-u_p^-) \ dx.
\end{split}
\end{equation}
The expression
\begin{align*}
\langle(|b|^2-\sigma^2)^{\frac{p}{2}-1}b-(|a|^2-\sigma^2)^{\frac{p}{2}-1}a, b-a \rangle  
\end{align*}
for vectors $a$ and $b$ appears.
A calculation shows that
\begin{align*}
\langle(|b|^2-\sigma^2)^{\frac{p}{2}-1}b-(|a|^2-\sigma^2)^{\frac{p}{2}-1}a, b-a \rangle = \\
\frac{1}{2}\Big( \big||b|^2-\sigma^2\big|^{\frac{p}{2}-1} + \big||a|^2-\sigma^2\big|^{\frac{p}{2}-1}\Big) |a-b|^2 +  \\
\frac{1}{2}\Big( \big||b|^2-\sigma^2\big|^{\frac{p}{2}-1} -
\big||a|^2-\sigma^2\big|^{\frac{p}{2}-1}\Big) \big(|b|^2 - |a|^2\big),
\end{align*}
and hence
\begin{align*}
\langle(|b|^2-\sigma^2)^{\frac{p}{2}-1}b-(|a|^2-\sigma^2)^{\frac{p}{2}-1}a, b-a \rangle \geq \\
\frac{1}{2}\Big( \big||b|^2-\sigma^2\big|^{\frac{p}{2}-1} + \big||a|^2-\sigma^2\big|^{\frac{p}{2}-1}\Big) |a-b|^2 .
\end{align*}
Using convexity, we get
\begin{align*}
\frac{1}{2}\Big( \big||b|^2-\sigma^2\big|^{\frac{p}{2}-1} + \big||a|^2-\sigma^2\big|^{\frac{p}{2}-1}\Big) |a-b|^2 \\
\geq \left|\frac{(|a|^2-\sigma^2)+(|b|^2-\sigma^2)}{2}\right|^{\frac{p}{2}-1}|a-b|^2 \\
= \left|\frac{|a|^2+|b|^2}{2}-\sigma^2\right|^{\frac{p}{2}-1}|a-b|^2 \\
\geq 4\sigma^2\left|\left|\frac{a+b}{2}\right|^2-\sigma^2\right|^{\frac{p}{2}-1}.
\end{align*}
Inserting this into \eqref{eq:difference}, with $\nabla  u_p^+$, $\nabla  u_p^-$ and taking $\frac{1}{p/2-1}$ roots, we get
\begin{align*}
\left(4\sigma^2\int_{\Omega}\left|\left|\frac{\nabla  u_p^+-\nabla  u_p^-}{2}\right|^2-\sigma^2\right|^{\frac{p}{2}-1} \ dx\right)^\frac{1}{p/2-1}\leq \left(\sigma^{p-4}\int_{\Omega}u_p^+-u_p^- \ dx\right)^{\frac{1}{p/2-1}},
\end{align*}
and upon letting $p \to \infty$:
\begin{equation*}
 \mbox{ess.sup}_{\Omega}(|\nabla  u^++\nabla  u^-|^2-4\sigma^2) \leq C\sigma^2, 
\end{equation*}
or
\[
\mbox{ess.sup}_{\Omega}(|\nabla  u^+-\nabla  u^-|) \leq \mbox{ess.sup}_{\Omega}(|\nabla  u^++\nabla  u^-|) \leq C'\sigma,
\]
Integrating, this is 
\begin{equation}
\label{eq:jensenconvergence}
\mbox{ess.sup}_{\Omega}|u^+-u^-| \leq \sigma C'\text{diam}(\Omega)
\end{equation}
or 
\[
u_{\sigma}^-\leq u_{\sigma} \leq u_{\sigma}^+ \leq u_{\sigma}^-+C''\sigma.
\]
This proves that as $\sigma \to 0$, the viscosity solutions of \eqref{eq:strangeinf} converge to infinity harmonic functions, since solutions of Jensen's Upper and Lower equation have this property.

We know from \cite{bhattacharya1989limits} and \cite{jensen} that as
$p\to \infty$ viscosity solutions $u_p$ of the
$p$-Laplace equation converge uniformly to the $\infty$-harmonic
function. This, combined with Theorem \ref{thm:ptoinfty} and the estimate \eqref{eq:jensenconvergence}, implies the following diagram of convergence 
\[
  \begin{tikzcd}
    u_{p, \sigma} \arrow{d}{\sigma \to 0} \arrow{r}{p \to \infty}  & u_{\sigma} \arrow{d}{\sigma \to 0} \\
     u_p \arrow{r}{p \to \infty} & u_{\infty},
  \end{tikzcd}
\]
proving Theorem \ref{thm:thegoodstuffs}.

\section{$\Gamma$-convergence}
\label{sec:gamma}
In this Section we prove Theorem \ref{thm:gammaconvergences}. This establishes that \eqref{eq:introvarint} is the "correct'' approximation to the functional $||\nabla  u||_{\infty}$. The following definition is found in \cite{braides2002gamma}.
\begin{definition}
We say that the functional $E_n$  $\Gamma$-converges to $E$ if

1) (The $\Gamma$-$\liminf$)

Whenever $u_{n} \to u$ in $X$, we have
\begin{equation}
\label{eq:gamma_lim_inf}
\liminf_{n \to \infty}E_n(u_n) \geq E(u)
\end{equation}
2) (The $\Gamma$-$\limsup$)

For every $u\in X$, there exists a  sequence $u_n$ (called the recovery sequence) so that $u_n \to u$ and
\begin{equation}
\label{eq:gamma_lim_sup}
\limsup_{n \to \infty}E_n(u_n) \leq E(u)
\end{equation}
\end{definition}
Define
\begin{equation}
\label{eq:psigmavarint}
E_p^{\sigma}(u) = \left(\int_{\Omega}\{|\nabla  u|^2-\sigma\}_+^{p/2} \ dx\right)^{1/p} =\|\{|\nabla  u|^2-\sigma\}_+^{1/2}\|_p
\end{equation}
\begin{equation}
E_p(u) = \|\nabla  u\|_p,
\end{equation}
\begin{equation}
E_{\infty}^{\sigma}(u) = \|\{|\nabla  u|^2-\sigma\}^{1/2}_+\|_{\infty},
\end{equation}
and
\begin{equation}
\label{eq:esssup}
E_{\infty}(u)= \|\nabla  u\|_{\infty}
\end{equation}
\begin{theorem}
\label{thm:gammaconvptoinfty}
$\|\{|\nabla  u|^2-\sigma\}_+^{1/2}\|_p $ $\Gamma$-converges to $\|\{|\nabla  u|-\sigma\}_+\|_{\infty}$ with respect to uniform convergence in $C(\Omega)$.
\end{theorem}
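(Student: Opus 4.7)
My plan is to verify the two clauses of the $\Gamma$-convergence definition separately, with respect to uniform convergence in $C(\overline{\Omega})$, under the convention that each functional equals $+\infty$ outside its natural Sobolev space. Throughout I read the target as $E_\infty^\sigma(u) = \|\{|\nabla u|^2-\sigma\}_+^{1/2}\|_{L^\infty(\Omega)}$, in accordance with the definition stated just above the theorem.

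For the $\Gamma$-$\limsup$ I would use the constant recovery sequence $u_p \equiv u$, which trivially converges uniformly. If $u \notin W^{1,\infty}(\Omega)$ the inequality holds vacuously, since the right-hand side is $+\infty$. Otherwise, writing $g = \{|\nabla u|^2-\sigma\}_+^{1/2} \in L^\infty(\Omega)$, the classical fact $\|g\|_{L^p(\Omega)} \to \|g\|_{L^\infty(\Omega)}$ as $p \to \infty$ on a bounded domain supplies the required bound.

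The $\Gamma$-$\liminf$ is the substantial direction. Given $u_n \to u$ uniformly with $L := \liminf_n E_{p_n}^{\sigma}(u_n) < \infty$, I pass to a subsequence (not relabelled) realising $L$, and set $g_n = \{|\nabla u_n|^2-\sigma\}_+^{1/2}$. For any fixed $q \geq 2$ and $p_n \geq q$, H\"older's inequality gives $\|g_n\|_{L^q(\Omega)} \leq |\Omega|^{1/q-1/p_n}\|g_n\|_{L^{p_n}(\Omega)}$. The pointwise bound $|\nabla u_n|^2 \leq g_n^2 + \sigma$, together with the uniform boundedness of $\{u_n\}$ in $C(\overline{\Omega})$, then forces $\{u_n\}$ to be bounded in $W^{1,q}(\Omega)$, so a further subsequence converges weakly in $W^{1,q}$ to a limit which must equal $u$ by the uniqueness of limits (uniform versus weak). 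Since the integrand $a \mapsto \{|a|^2-\sigma\}_+^{q/2}$ is convex and continuous, weak lower semicontinuity in $W^{1,q}$, as invoked in Lemma~\ref{exists}, gives
\[
\|\{|\nabla u|^2-\sigma\}_+^{1/2}\|_{L^q(\Omega)} \leq \liminf_n \|g_n\|_{L^q(\Omega)} \leq |\Omega|^{1/q} L.
\]
Sending $q \to \infty$ completes the proof: if $\{|\nabla u|^2-\sigma\}_+^{1/2}$ exceeded some $M > L$ on a measurable set $A$ of positive measure, the left-hand side would be at least $M|A|^{1/q} \to M$, contradicting the right-hand side tending to $L$. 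Hence $E_\infty^\sigma(u) \leq L$.

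The main obstacle is that uniform convergence of $u_n$ carries no direct information on the gradients, so all gradient control must be retrieved from the coercivity of the functional itself. The H\"older interpolation is precisely the bridge: it converts $L^{p_n}$ control of $g_n$ into uniform $L^q$ bounds, feeding the convex weak lower semicontinuity machinery; the residual task of letting $q \to \infty$ afterwards is elementary because $|\Omega|^{1/q} \to 1$ and because essential suprema can be detected on superlevel sets of positive measure.
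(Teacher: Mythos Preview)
Your proof is correct and follows essentially the same route as the paper: H\"older interpolation from $L^{p_n}$ to $L^q$, weak compactness in $W^{1,q}$, convex weak lower semicontinuity of the truncated functional, and finally sending $q\to\infty$; the constant recovery sequence for the $\Gamma$-$\limsup$ is also what the paper uses. Your organisation is in fact slightly cleaner, since you invoke the given uniform convergence directly to bound $\|u_n\|_{L^q}$ rather than centering by the mean and appealing to Poincar\'e as the paper does.
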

\begin{proof}

Assume that $(u_p)_p\subset W^{1, p}(\Omega)$ is such that $\|\nabla  u_{p_j}\|_{p_j}\leq C$ for some subsequence $p_j\to \infty$ as $j \to \infty$. Our goal is to extract a subsequence of $u_p$ that converges uniformly to a function $u \in C(\Omega)$.

Fix $q>2$. Then Hölder's inequality gives the estimate
\begin{equation}
\label{eq:holderestimate}
\|\{|\nabla  u_{p_j}|^2-\sigma\}_+^{1/2}\|_q \leq |\Omega|^{\frac{1}{q}-\frac{1}{p_j}}\|\{|\nabla  u_{p_j}|^2-\sigma\}_+^{1/2}\|_{p_j}
\end{equation}
for all $p_j \geq q$.
Further, the Poincaré inequality gives
\[
\|u_{p_j}- u_{{\Omega}, p_j}\|_{W^{1, q}(\Omega)} \leq C\|\nabla  u_{p_j}\|_{W^{1, q}},
\]
so that the sequence $\|u_{p_j}- u_{{\Omega}, p_j}\|_{W^{1, q}(\Omega)} $ is bounded. Hence the weak compactness of the ${W^{1, q}}$-spaces implies the existence a subsequence that converges weakly in ${W^{1, q}(\Omega)}$ to some $u_q$. A diagonal procedure now gives a new subsequence, labeled $u_k$ for convenience, so that
\[
(u_k-u_{k,\Omega}) \to u \ \text{as} \ k \to \infty
\]
weakly in ${W^{1, q}(\Omega)}$ for all $2<q<\infty$. This implies that the limit function $u$ is in ${W^{1, q}(\Omega)}$ for all $q$.
The lower semi-continuity of the $q$-norm gives
\[
\|\{|\nabla  u|^2-\sigma\}_+^{1/2}\|_q \leq \liminf_{k \to \infty}\|\{|\nabla  u_k|^2-\sigma\}_+^{1/2}\|_q, 
\]
which together with the estimate \eqref{eq:holderestimate} gives
\begin{align*}
\|\{|\nabla  u|^2-\sigma\}_+^{1/2}\|_q &\leq |\Omega|^{\frac{1}{q}}\liminf_{k \to \infty}\left(|\Omega|^{-\frac{1}{k}}\|\{|\nabla  u_k|^2-\sigma\}_+^{1/2}\|_k\right)  \\
&= |\Omega|^{\frac{1}{q}}\liminf_{k \to \infty}\|\{|\nabla  u_k|^2-\sigma\}_+^{1/2}\|_k .
\end{align*}
As $q \to \infty$, we get
\begin{equation}
\label{eq:liminfestimate}
\|\{|\nabla  u|^2-\sigma\}_+^{1/2}\|_{\infty}  \leq \liminf_{k \to \infty}\|\{|\nabla  u_k|^2-\sigma\}_+^{1/2}\|_k.
\end{equation}
We see that we have $\{|\nabla  u|^2-\sigma\}_+^{1/2} \in L^{\infty}(\Omega)$, and so $u \in W^{1, \infty}(\Omega)$ and $u \in C(\Omega)$.

Fix a $q>n$, and let $V \subset \Omega$ be a sub-domain with regular boundary. Morrey's inequality and then Poincaré inequality gives
\begin{align*}
\|u_k-u_{k, \Omega}\|_{C^{0, 1-\frac{n}{q}}(V)} 
\leq C(q, n)\|u_k-u_{k, \Omega}\|_{W^{1, q}(V)} \\
\leq C(q, n)\|u_k-u_{k, \Omega}\|_{W^{1, q}(\Omega)} \\
\leq C(q, n)\|\nabla  u_k\|_q \leq K <\infty,
\end{align*}
for all $k \geq q$. Thus there exists a subsequence of $u_k$ that converges in $L^{\infty}(V)$ to $u$. Exhausting $\Omega$ with an increasing sequence of regular sets, a diagonal argument gives
\begin{equation}
\label{eq:uniformconvergence}
(u_k-u_{k, \Omega}) \to u \ \text{in} \ L^{\infty}(\Omega).
\end{equation}
We shall prove the $\Gamma$-$\liminf$ property, that is that for every sequence $u_p$ that converges uniformly to $u$ in $C(\Omega)$, we have that
\begin{equation}
\label{eq:pgammaliminf}
E_{\infty}^{\sigma}(u) \leq \liminf_{p \to \infty}E^{\sigma}_p(u_p).
\end{equation}
This follows directly from the estimate \eqref{eq:liminfestimate}, together with the uniform convergence of $u_p$ in \eqref{eq:uniformconvergence}.

Further we prove the $\Gamma$-$\limsup$ property, that is for every $u \in C(\Omega)$ there exists a sequence $u_p$ (called the recovery sequence of $u$) converging uniformly to $u$ so that
\begin{equation}
\label{eq:pgammalimsup}
E_{\infty}^{\sigma}(u) \geq \limsup_{p \to \infty}E^{\sigma}_p(u_p).
\end{equation}
Since 
\[
\|f\|_{L^{\infty}(\Omega)} = \lim_{p \to \infty}\|f\|_{L^p(\Omega)}
\]
holds for all measurable functions $f$, the $\Gamma$-$\limsup$ property follows immediately with $f =\{|\nabla  u|-\sigma\}_+^{\frac{1}{2}}$ and $u_p =u$ for all $p$.
\end{proof}
The fundamental theorem properties of $\Gamma$-convergence gives, \\ see \cite{braides2002gamma}:
\begin{enumerate}
\item If $\lim_{p \to \infty}(E^{\sigma}_p(u_p)-\min E^{\sigma}_p)=0$ then $u_p \to u$ in $C(\Omega)$, and $E_{\infty}^{\sigma}(u)=\min E_{\infty}^{\sigma}$.
\item If $E_{\infty}^{\sigma}(u)=\min E_{\infty}^{\sigma}$, then there exists a sequence $u_p$ with $u_p \to u$ as $p \to \infty$ so that $\lim_{p \to \infty}(E^{\sigma}_p(u_p)-\min E^{\sigma}_p)=0$
\end{enumerate}
This implies that any sequence $u_p$ of viscosity solutions of \eqref{eq:strongeq}  accumulate at a minimiser of $E_{\infty}^{\sigma}$. Using this, we can prove the following analogue to the classical Absolutely Minimizing Lipschitz Extension property of $\infty$-harmonic functions described in  \cite{bhattacharya1989limits}.
\begin{theorem}
\label{thm:absolutelyminsigma}
Let $u$ be the limit of minimizers. Then for every \\ $V \subset \Omega\setminus \{|\nabla  u|^2<\sigma\}$  we have that 
\begin{equation}
\|\{|\nabla  u|-\sigma\}_+\|_{L^{\infty}(V)} \leq \|\{|\nabla  w|-\sigma\}_+\|_{L^{\infty}(V)}
\end{equation}
for every $w \in W^{1, \infty}(\Omega) \cap C(\overline{\Omega})$, $u=w$ on $\partial V$.
\end{theorem}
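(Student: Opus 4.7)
The plan is to exploit the global minimality of $u$ (as the limit of $p$-minimizers of $E_p^\sigma$, guaranteed by the fundamental property of $\Gamma$-convergence recorded after Theorem \ref{thm:gammaconvptoinfty}) together with a cut-and-paste competitor, and then pass to $p\to\infty$. The role of the hypothesis $V\subset\Omega\setminus\{|\nabla u|^2<\sigma\}$ will be to cancel out the contribution from $\Omega\setminus V$ in the limiting inequality.

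Given $V$ and $w$ as in the statement (assume $V\subset\subset\Omega$; the general case by inner exhaustion), I define
\[ \tilde w(x) := w(x)\,\mathbf{1}_V(x) + u(x)\,\mathbf{1}_{\Omega\setminus V}(x), \]
which is Lipschitz on $\Omega$ by the matching condition $u=w$ on $\partial V$. Since $\tilde w - u$ is compactly supported in $\overline{V}$, I have $\tilde w - f = (\tilde w - u) + (u - f)\in W^{1,p}_0(\Omega)$ for every $p$, so $\tilde w$ is admissible for each $p$-problem and the minimality of $u_p$ gives $E_p^\sigma(u_p)\le E_p^\sigma(\tilde w)$ for all $p>2$. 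Passing $p\to\infty$, the $\Gamma$-$\liminf$ inequality of Theorem \ref{thm:gammaconvptoinfty} yields $\|\{|\nabla u|^2-\sigma\}_+^{1/2}\|_{L^\infty(\Omega)}\le \liminf_p E_p^\sigma(u_p)$, while the $p$-independence of $\tilde w$ combined with the classical $\|h\|_{L^p}\to\|h\|_{L^\infty}$ for bounded measurable $h$ gives
\[ \lim_{p\to\infty}E_p^\sigma(\tilde w) = \max\Bigl(\|\{|\nabla w|^2-\sigma\}_+^{1/2}\|_{L^\infty(V)},\,\|\{|\nabla u|^2-\sigma\}_+^{1/2}\|_{L^\infty(\Omega\setminus V)}\Bigr). \]
Splitting the $L^\infty$ norm on the left over $V$ and $\Omega\setminus V$ and cancelling the common contribution leaves
\[ \|\{|\nabla u|^2-\sigma\}_+^{1/2}\|_{L^\infty(V)} \le \max\Bigl(\|\{|\nabla w|^2-\sigma\}_+^{1/2}\|_{L^\infty(V)},\,\|\{|\nabla u|^2-\sigma\}_+^{1/2}\|_{L^\infty(\Omega\setminus V)}\Bigr). \]

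The main obstacle is to collapse the right-hand maximum onto its first argument. For the maximal choice $V=\Omega\setminus\{|\nabla u|^2<\sigma\}$ this is automatic, since then $\{|\nabla u|^2-\sigma\}_+\equiv 0$ on $\Omega\setminus V$ and the second term in the maximum vanishes identically, delivering the strong AMLE at once. For a general $V$ strictly contained in $\Omega\setminus\{|\nabla u|^2<\sigma\}$, I would work locally: by a cut-and-paste at the level of the global $p$-problem, $u_p|_V$ minimizes $E_p^\sigma(\cdot;V)$ among competitors with trace $u_p|_{\partial V}$ on $\partial V$, and the $\Gamma$-convergence argument of Theorem \ref{thm:gammaconvptoinfty} applies verbatim on $V$. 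The delicate step is constructing a recovery sequence for $w$ that matches the $p$-dependent boundary trace $u_p|_{\partial V}$; a natural candidate is $w_p := w+(u_p-u)$, whose trace on $\partial V$ equals $u_p|_{\partial V}$ and which converges uniformly to $w$, the uniform $W^{1,q}$-bounds on $\{u_p\}$ from Theorem \ref{thm:ptoinfty} giving hope for the $\Gamma$-$\limsup$ bound $\limsup_p E_p^\sigma(w_p;V)\le E_\infty^\sigma(w;V)$ that the fundamental theorem of $\Gamma$-convergence then turns into the claimed strong AMLE on $V$.
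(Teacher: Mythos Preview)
Your global cut-and-paste argument is clean and does deliver the result for the maximal choice $V=\Omega\setminus\{|\nabla u|^2<\sigma\}$. The gap is exactly where you flag it: for a general $V$ strictly contained in that set, the proposed recovery sequence $w_p=w+(u_p-u)$ does not yield the $\Gamma$-$\limsup$ bound you need. That bound would require control of $|\nabla w_p|$ in $L^\infty$, but all you have on $\nabla u_p-\nabla u$ is a uniform $W^{1,q}$ bound for each fixed $q$ together with \emph{weak} convergence in $L^q$. Since $E_p^\sigma(\cdot;V)^{1/p}$ is essentially an $L^p$ norm with $p\to\infty$, any region (however small in measure) where $|\nabla w_p|$ exceeds $\|\nabla w\|_{L^\infty(V)}$ will make $\limsup_p E_p^\sigma(w_p;V)$ overshoot $E_\infty^\sigma(w;V)$. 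There is no mechanism here forcing $\|\nabla u_p-\nabla u\|_{L^\infty(V)}\to 0$, so ``hope'' is the right word: the argument is genuinely incomplete.

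The paper avoids the boundary-matching problem entirely by the Bhattacharya--DiBenedetto--Manfredi level-set device. Instead of fixing $V$ and struggling to match traces on $\partial V$, one works on the $p$-dependent open set $\{w>u_p+\epsilon/2\}$, on whose boundary the shifted competitor $w-\epsilon/2$ agrees with $u_p$ \emph{by construction}. Local minimality of $u_p$ on that set gives
\[
\int_{\{w>u_p+\epsilon/2\}}\{|\nabla u_p|^2-\sigma\}_+^{p/2}\,dx
\;\le\;
\int_{\{w>u_p+\epsilon/2\}}\{|\nabla w|^2-\sigma\}_+^{p/2}\,dx
\;\le\;
\|\{|\nabla w|^2-\sigma\}_+\|_{L^\infty(V)}^{p/2}\,|V|,
\]
using the inclusions $\{w>u+\epsilon\}\subset\{w>u_p+\epsilon/2\}\subset\{w>u\}\subset V$ guaranteed by the uniform convergence $u_p\to u$. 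Taking $p$-th roots, passing to $\liminf_{p\to\infty}$ via the $\Gamma$-$\liminf$ inequality on the left, then letting $\epsilon\downarrow 0$ and repeating on $\{w<u\}$, yields the claim on all of $V$. This sidesteps both the cancellation obstruction in your global argument and the trace obstruction in your localized one.
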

\begin{proof}
The proof mimics \cite{bhattacharya1989limits}.

Let $w \in W^{1, \infty}(V) \cap C(\overline{V})$ be given, and consider \\ $\{w>u\}\subset V$. Fix $\epsilon >0$ so that $\{w>u+\epsilon\}$ is an open, non-empty subset of $\{w>u\}$. In view of uniform convergence of $u_p$, fix $p$ big enough so that 
\begin{align*}
\{w>u+\epsilon\} \subset \{w>u_p+\epsilon/2\} \subset \{w>u\} 
\end{align*}
We get
\begin{align*}
\int_{\{w>u+\epsilon\}}\{|\nabla  u|^2-\sigma\}_+^{p/2} \ dx 
\leq \int_{\{w>u_p+\epsilon/2\}}\{|\nabla  u|^2-\sigma\}_+^{p/2} \ dx \\
\leq \int_{\{w>u_p+\epsilon/2\}}\{|\nabla(w-\epsilon/2)|^2-\sigma\}_+^{p/2} \ dx \\
\leq \|\{|\nabla  u|^2-\sigma\}_+\|^p_{L^{\infty}(\{w>u_p+\epsilon/2\})}|(\{w>u_p+\epsilon/2\}| \\
\leq \|\{|\nabla  u|^2-\sigma\}_+\|^p_{L^{\infty}(\{w>u+\epsilon\})}|(\{w>u+\epsilon\}|.
\end{align*}
Raising both sides of the inequality to $1/p$ and $\liminf_{p \to \infty}$, we get
\begin{align*}
 \|\{|\nabla  u|^2-\sigma\}_+\|_{L^{\infty}(\{w>u+\epsilon\})} \\
 \leq \liminf_{p \to \infty}\|\{|\nabla  w|^2-\sigma\}_+\|_{L^{\infty}(\{w>u\})}|(\{w>u\}|^{\frac{1}{p}} \\
 \leq \|\{|\nabla  w|^2-\sigma\}_+\|_{L^{\infty}(\{w>u+\epsilon\})}.
\end{align*}
Since $\epsilon$ was arbitrary, we get
\[
\|\{|\nabla  u|^2-\sigma\}_+\|_{L^{\infty}(\{w>u\})} \leq \|\{|\nabla  w|^2-\sigma\}_+\|_{L^{\infty}(\{w>u\})}.
\]
We have that 
\[
V=\{w>u\} \cup \{w<u\} \cup \{w=u\},
\]
and the argument above can be repeated with the set $\{w<u\}$. Since $\nabla  u =\nabla  w$ in $\{u=w\}$, we have
\[
\|\{|\nabla  u|-\sigma\}_+\|_{L^{\infty}(V)} \leq \|\{|\nabla  w|-\sigma\}_+\|_{L^{\infty}(V)}.
\]
\end{proof}
The proof that $E_p$ $\Gamma$-converges to $E_{\infty}$ is very similar. All the arguments in the proof of Theorem \ref{thm:gammaconvptoinfty} is true for $\sigma =0$, and so we get
\begin{theorem}
\label{thm:p to infty}
As $p \to \infty$, 
\[
E_p \overset{\Gamma}{\to} E_{\infty},
\]
with respect to uniform convergence.
\end{theorem}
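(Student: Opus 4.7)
The plan is to mimic the proof of Theorem \ref{thm:gammaconvptoinfty} with $\sigma=0$, since every step there is insensitive to the truncation parameter. Set the ambient space to be $X = C(\overline\Omega)$ with uniform convergence, and extend $E_p$ and $E_\infty$ by $+\infty$ outside $W^{1,p}(\Omega)$ and $W^{1,\infty}(\Omega)$ respectively, so that the $\Gamma$-convergence assertion is about the functionals $E_p(u) = \|\nabla u\|_{L^p(\Omega)}$ and $E_\infty(u) = \|\nabla u\|_{L^\infty(\Omega)}$.

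For the $\Gamma$-liminf inequality, I would take any sequence $u_p \to u$ uniformly in $C(\overline\Omega)$ and assume, without loss of generality, that $\liminf_{p\to\infty} E_p(u_p) = \lim_{j\to\infty}\|\nabla u_{p_j}\|_{p_j} < \infty$ for some $p_j\to\infty$. Fix any $q>n$. By H\"older's inequality,
\[
\|\nabla u_{p_j}\|_{L^q(\Omega)} \leq |\Omega|^{\frac{1}{q}-\frac{1}{p_j}}\|\nabla u_{p_j}\|_{L^{p_j}(\Omega)}
\]
for all $p_j\geq q$, so $\{u_{p_j}\}$ is bounded in $W^{1,q}(\Omega)$. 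By the diagonal/weak compactness argument already executed in Theorem \ref{thm:gammaconvptoinfty}, a subsequence converges weakly in every $W^{1,r}(\Omega)$ with $r<\infty$, and by uniform convergence its limit is $u$. The weak lower semicontinuity of the $L^q$ norm then yields
\[
\|\nabla u\|_{L^q(\Omega)} \leq |\Omega|^{\frac{1}{q}}\liminf_{j\to\infty}\|\nabla u_{p_j}\|_{L^{p_j}(\Omega)}.
\]
Letting $q\to\infty$ gives $E_\infty(u) \leq \liminf_{p\to\infty} E_p(u_p)$, as required.

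For the $\Gamma$-limsup inequality, I would simply take the constant recovery sequence $u_p = u$. If $u\notin W^{1,\infty}(\Omega)$ there is nothing to prove. Otherwise $u\in W^{1,p}(\Omega)$ for all $p\geq 1$ (since $\Omega$ is bounded) and the standard fact that $\|f\|_{L^p(\Omega)} \to \|f\|_{L^\infty(\Omega)}$ for any $f\in L^\infty(\Omega)$, applied to $f = |\nabla u|$, gives
\[
\limsup_{p\to\infty} E_p(u) = \lim_{p\to\infty} \|\nabla u\|_{L^p(\Omega)} = \|\nabla u\|_{L^\infty(\Omega)} = E_\infty(u).
\]
There is no essential obstacle here: the only mild point is the equi-integrability upgrade used in the liminf step, which is already handled by the diagonal argument in Theorem \ref{thm:gammaconvptoinfty} and carries over verbatim with $\sigma=0$.
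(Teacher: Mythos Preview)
Your proposal is correct and follows exactly the approach of the paper, which simply states that all arguments in the proof of Theorem~\ref{thm:gammaconvptoinfty} go through verbatim with $\sigma=0$. You have in fact written out more detail than the paper does, but the strategy---H\"older plus weak lower semicontinuity for the $\Gamma$-$\liminf$, and the constant recovery sequence for the $\Gamma$-$\limsup$---is identical.
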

We prove that as $\sigma \to 0$, we retrieve the well-known $p$-energy functionals related to the $p$-Laplace equation.
\begin{theorem}
Let $p>n$. Then
\[
E^{\sigma}_p \overset{\Gamma}{\to} E_p \ \text{as} \ \sigma \to 0,
\]
in the uniform convergence topology on $C(\overline{\Omega})$.
\end{theorem}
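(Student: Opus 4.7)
The plan is to verify the two $\Gamma$-convergence conditions separately. The $\Gamma$-$\limsup$ will follow from a direct pointwise comparison, whereas the $\Gamma$-$\liminf$ will require upgrading uniform convergence to weak $W^{1,p}$ convergence along with a \emph{sharp} comparison between the truncated and untruncated $p$-energies.

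For the $\Gamma$-$\limsup$, I would simply take the constant recovery sequence $u_\sigma\equiv u$. If $E_p(u)=+\infty$ there is nothing to prove; otherwise $u\in W^{1,p}(\Omega)\hookrightarrow C(\overline\Omega)$ by Morrey's embedding (this is where $p>n$ enters), and the pointwise bound $\{|\nabla u|^2-\sigma\}_+^{p/2}\leq|\nabla u|^p$ immediately gives $E_p^\sigma(u)\leq E_p(u)$, so $\limsup_{\sigma\to 0}E_p^\sigma(u)\leq E_p(u)$.

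For the $\Gamma$-$\liminf$, assume $u_\sigma\to u$ uniformly in $C(\overline\Omega)$ and set $M=\liminf_{\sigma\to 0}E_p^\sigma(u_\sigma)$; I may assume $M<\infty$ and pass to a subsequence $\sigma_k\to 0$ along which $E_p^{\sigma_k}(u_{\sigma_k})\to M$. The central ingredient is the elementary inequality
\[
(a+b)^{p/2}\leq (1+\epsilon)\, a^{p/2}+C_\epsilon\, b^{p/2}\qquad \text{for all } a,b\geq 0,
\]
valid for every $\epsilon>0$ and some $C_\epsilon>0$, which follows from the observation that $(t+1)^{p/2}-(1+\epsilon) t^{p/2}\to -\infty$ as $t\to\infty$ and is therefore bounded above on $[0,\infty)$. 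Applying it with $a=\{|\nabla u_{\sigma_k}|^2-\sigma_k\}_+$ and $b=\sigma_k$, and noting that $a+b\geq|\nabla u_{\sigma_k}|^2$ regardless of whether $|\nabla u_{\sigma_k}|^2$ exceeds $\sigma_k$, I obtain
\[
|\nabla u_{\sigma_k}|^p\leq (1+\epsilon)\{|\nabla u_{\sigma_k}|^2-\sigma_k\}_+^{p/2}+C_\epsilon\, \sigma_k^{p/2},
\]
and integration yields
\[
\int_\Omega|\nabla u_{\sigma_k}|^p \ dx\leq (1+\epsilon) E_p^{\sigma_k}(u_{\sigma_k})^p+C_\epsilon\, \sigma_k^{p/2}|\Omega| \longrightarrow (1+\epsilon) M^p.
\]

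To conclude, I would note that $\{u_{\sigma_k}\}$ is bounded in $W^{1,p}(\Omega)$ (the $L^p$ norm is controlled by the uniform norm, the gradients by the estimate above), extract a further subsequence along which $u_{\sigma_k}\rightharpoonup u$ weakly in $W^{1,p}$ (the weak limit must be $u$ since it agrees with the uniform limit), and invoke weak lower semicontinuity of the $L^p$ norm to get
\[
E_p(u)^p=\int_\Omega|\nabla u|^p \ dx\leq \liminf_{k\to\infty}\int_\Omega|\nabla u_{\sigma_k}|^p \ dx\leq (1+\epsilon) M^p,
\]
then send $\epsilon\to 0$ to arrive at $E_p(u)\leq M$. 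The real obstacle is securing the pointwise inequality with leading constant $(1+\epsilon)$ in place of the crude $2^{p/2-1}$ coming from subadditivity: this refinement is precisely what allows the spurious multiplicative constant to vanish in the limit $\sigma\to 0$, delivering the sharp $\liminf$ bound needed for $\Gamma$-convergence.
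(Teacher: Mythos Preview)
Your $\Gamma$-$\limsup$ argument coincides with the paper's: both take the constant recovery sequence $u_\sigma\equiv u$ and use the pointwise bound $\{|\nabla u|^2-\sigma\}_+\le |\nabla u|^2$.

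For the $\Gamma$-$\liminf$, your route is genuinely different and in fact more careful than the paper's. The paper simply asserts that $\{|\nabla u_\sigma|^2-\sigma\}_+^{p/2}\to|\nabla u|^p$ pointwise and then invokes Fatou's lemma; but uniform convergence of $u_\sigma$ to $u$ does not by itself yield any pointwise (or a.e.) convergence of $\nabla u_\sigma$, so that step is not justified as written. Your argument sidesteps this gap: you bound $\int_\Omega|\nabla u_{\sigma_k}|^p\,dx$ by $(1+\epsilon)\,E_p^{\sigma_k}(u_{\sigma_k})^p+C_\epsilon\,\sigma_k^{p/2}|\Omega|$, upgrade uniform convergence to weak $W^{1,p}$ convergence (the weak limit must be $u$), apply weak lower semicontinuity of $\|\nabla\cdot\|_p$, and finally let $\epsilon\to0$. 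This is correct and supplies exactly what the paper's Fatou line glosses over. One minor simplification: since the functionals here already carry the outer exponent $1/p$, the square-root subadditivity $|\nabla u_\sigma|\le \{|\nabla u_\sigma|^2-\sigma\}_+^{1/2}+\sigma^{1/2}$ together with the triangle inequality in $L^p$ gives $\|\nabla u_\sigma\|_p\le E_p^\sigma(u_\sigma)+\sigma^{1/2}|\Omega|^{1/p}$ directly, after which your weak-compactness and lower-semicontinuity steps finish the job without the auxiliary $\epsilon$.
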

\begin{proof}
We have from before that $\|\nabla  u_{\sigma}\|_p$ and $\|u_{\sigma}\|_p$ are bounded. Since $p>n$ , Morrey's inequality implies that $u \in C(V)$ for a regular $V \subset \Omega$. Well-known bounds give that the sequence $u_{\sigma}$ is equicontinuous on $V$, and Arzelá-Ascoli compactness criterion implies that $u_{\sigma} \to u$ as $\sigma \to 0$ uniformly on $V$. Exhausting $\Omega$ with regular sets, a diagonal procedure gives $u_{\sigma} \to u$ as $\sigma \to 0$ uniformly on $\Omega$. 

To prove the $\Gamma$-$\liminf$ property \eqref{eq:gamma_lim_inf} we must show that for every $u_{\sigma}$ that converges uniformly to $u$ we have
\begin{equation}
\label{eq:fatou}
\|\nabla  u\|_p \leq \liminf_{\sigma \to 0} \|\{|\nabla  u_{\sigma}|^2-\sigma\}^{1/2}_+\|_p.
\end{equation}
Clearly $\{|\nabla  u_{\sigma}|^2-\sigma\}^{p/2}_+ \to |\nabla  u|^p$ as $\sigma \to 0$, and so Fatou's lemma gives \eqref{eq:fatou}.

For the $\Gamma$-$\limsup$, we define our recovery sequence by $u_{\sigma} =u$ for all $\sigma >0$. Clearly $u_{\sigma} \to u$, and 
\begin{equation}
\label{eq:obvious}
\{|\nabla  u|^2-\sigma\}^{1/2}_+ \leq |\nabla  u|,
\end{equation}
so raising both sides to the power $p/2$, integrating over $\Omega$ and taking $\limsup$, we get
\[
\limsup_{\sigma \to 0}\int_{\Omega}\{|\nabla  u|^2-\sigma\}_+^{p/2} \ dx \leq \int_{\Omega}|\nabla  u|^p \ dx,
\]
showing that property \eqref{eq:gamma_lim_sup} holds, and so $E^{\sigma}_p$ $\Gamma$-converges to $E_p$ with respect to uniform convergence.
\end{proof}
For the last convergence in Theorem \ref{thm:gammaconvergences}, we note that since \eqref{eq:fatou} holds for all $p>n$, it also holds in the limit $p\to \infty$. This combined with \eqref{eq:obvious}, shows that the following Theorem holds.
\begin{theorem}
\label{thm:lastthm}
As $\sigma \to 0$, 
\[
E_{\infty}^{\sigma} \overset{\Gamma}{\to} E_{\infty},
\]
with respect to uniform convergence.
\end{theorem}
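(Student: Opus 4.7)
The plan is to verify the two standard conditions of $\Gamma$-convergence with respect to uniform convergence in $C(\overline\Omega)$. The $\Gamma$-$\limsup$ admits the trivial recovery sequence, and the $\Gamma$-$\liminf$ will be extracted from the already established $L^p$-level inequality \eqref{eq:fatou} by letting $p \to \infty$, with a uniform volume bound inserted to upgrade the $L^p$ norm on the right-hand side to an $L^\infty$ norm.

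For the $\Gamma$-$\limsup$, I would set $u_\sigma = u$ for every $\sigma > 0$. Then trivially $u_\sigma \to u$ in $C(\overline\Omega)$, and the pointwise inequality \eqref{eq:obvious} immediately gives
\[
\|\{|\nabla u|^2 - \sigma\}_+^{1/2}\|_{L^\infty(\Omega)} \leq \|\nabla u\|_{L^\infty(\Omega)},
\]
so $\limsup_{\sigma\to 0} E_\infty^\sigma(u_\sigma) \leq E_\infty(u)$. If $u \notin W^{1,\infty}(\Omega)$ the bound is vacuous.

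For the $\Gamma$-$\liminf$, suppose $u_\sigma \to u$ uniformly and assume, without loss of generality, that $L:=\liminf_{\sigma \to 0} E_\infty^\sigma(u_\sigma) < \infty$. Fix any $p > n$. The inequality \eqref{eq:fatou}, proven under the hypothesis of uniform convergence $u_\sigma \to u$, yields
\[
\|\nabla u\|_{L^p(\Omega)} \leq \liminf_{\sigma \to 0} \|\{|\nabla u_\sigma|^2 - \sigma\}_+^{1/2}\|_{L^p(\Omega)}.
\]
Combining with the elementary estimate $\|f\|_{L^p(\Omega)} \leq |\Omega|^{1/p} \|f\|_{L^\infty(\Omega)}$ on the right-hand side, and pulling the $\sigma$-independent factor $|\Omega|^{1/p}$ out of the $\liminf$, one obtains $\|\nabla u\|_{L^p(\Omega)} \leq |\Omega|^{1/p} L$. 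Letting $p \to \infty$ and using $\|\nabla u\|_{L^p(\Omega)} \to \|\nabla u\|_{L^\infty(\Omega)}$ together with $|\Omega|^{1/p} \to 1$ then gives $E_\infty(u) \leq L$, which is the desired lower bound.

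The main potential obstacle is the interchange of $\liminf_{\sigma\to 0}$ with $\lim_{p\to\infty}$, which is not automatic; but this is neatly avoided by the insertion of the $L^p$-$L^\infty$ bound before passing $p\to\infty$, so that the $\sigma$-dependent quantity is already controlled by $L|\Omega|^{1/p}$ uniformly in $p$. An alternative, more self-contained route would be to observe that $E_\infty^\sigma(u_\sigma) \leq L + o(1)$ forces the uniform Lipschitz bound $\|\nabla u_\sigma\|_{L^\infty(\Omega)} \leq \sqrt{(L+o(1))^2 + \sigma}$, from which Arzel\`a-Ascoli produces a Lipschitz limit $u$ satisfying $\|\nabla u\|_{L^\infty(\Omega)} \leq L$ by the classical lower semicontinuity of the Lipschitz constant under uniform convergence.
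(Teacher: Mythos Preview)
Your proof is correct and follows the same approach as the paper: the $\Gamma$-$\limsup$ via the constant recovery sequence and \eqref{eq:obvious}, the $\Gamma$-$\liminf$ by passing $p\to\infty$ in \eqref{eq:fatou}. The paper merely asserts that ``since \eqref{eq:fatou} holds for all $p>n$, it also holds in the limit $p\to\infty$''; your insertion of the factor $|\Omega|^{1/p}$ to decouple the $\sigma$-limit from the $p$-limit is exactly the missing justification for that passage, so your write-up is in fact more complete than the paper's.
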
 
We have that Theorem \ref{thm:gammaconvptoinfty}, Theorem \ref{thm:p
  to infty}, Theorem \ref{thm:lastthm} and Theorem
\ref{eq:pgammaliminf} together prove Theorem
\ref{thm:gammaconvergences}. 

\section*{Acknowledgement}
The authors would also like to thank Professor Peter Lindqvist 
for his kind advice.  
\bibliographystyle{alpha}
\bibliography{refs} 
\end{document}